\numberwithin{equation}{section}
\theoremstyle{plain}
\newtheorem{theorem}{Theorem}[section]
\newtheorem{definition}[theorem]{Definition}
\newtheorem{example}[theorem]{Example}
\newtheorem{hyp}{Assumption}
\newtheorem{proposition}[theorem]{Proposition}
\newtheorem{lemma}[theorem]{Lemma}
\newtheorem{remark}[theorem]{Remark}
\newcommand{\E}{{\mathbb E}}
\newcommand{\N}{{\mathbb N}}
\renewcommand{\P}{{\mathbb P}}
\newcommand{\R}{{\mathbb R}}
\newcommand{\ind}{{\bf 1}}
\newcommand{\Card}{{\rm Card}}
\newcommand{\sub}{{\rm sub}}
\newcommand{\class}{{\rm class}}
\newcommand{\Lag}{\mathcal{L}{\rm ag}}
\newcommand{\x}{{\mathbf x}}
\newcommand{\lbrac}{[\![}
\newcommand{\rbrac}{]\!]}
\def\tvc#1{\color{black}#1 \color{black}}
\def\thuy#1{\color{black}#1 \color{black}}
\begin{document}
	
	\begin{frontmatter}
		\title{Estimation of dense stochastic block models visited by random walks}
		\runtitle{Estimation of dense SBM with RW}
		\thankstext{T1}{The authors thank the two anonymous Referees whose comments contributed much to improve the paper. Especially, the remark of one of the Referee directly pointed to the solution of Section \ref{sec:Referee}. They are also indebted to Jean-St\'ephane Dhersin, Sophie Donnet, St\'ephane Robin, Adrian R\"ollin and Timoth\'ee Tabouy for discussions. This work was supported by the GdR GeoSto 3477, by the ANR Econet (ANR-18-CE02-0010) and by the Chair ``Modélisation Mathématique et Biodiversité" of Veolia Environnement-Ecole Polytechnique-Museum National d'Histoire Naturelle-Fondation X.}
		
		\begin{aug}
			\author{\fnms{Viet Chi} \snm{Tran}\ead[label=e1]{chi.tran@univ-eiffel.fr}}
			\and
			\author{\fnms{Thi Phuong Thuy} \snm{Vo}\ead[label=e2]{phuongthuywz@gmail.com}}
			\address{LAMA, Univ Gustave Eiffel, Univ Paris Est Creteil, CNRS, F-77454 Marne-la-Vall\'ee, France\\
				\printead{e1,e2}}
			\runauthor{Tran Viet Chi and Vo Thi Phuong Thuy}
			\affiliation{Universit\'e Gustave Eiffel}
		\end{aug}
		
		\begin{abstract}
			We are interested in recovering information on a stochastic block model from the subgraph discovered by an exploring random walk. Stochastic block models correspond to populations structured into a finite number of types, where two individuals are connected by an edge independently from the other pairs and with a probability depending on their types. We consider here the dense case where the random network can be approximated by a graphon. This problem is motivated from the study of chain-referral surveys where each interviewee provides information on her/his contacts in the social network. First, we write the likelihood of the subgraph discovered by the random walk: biases are appearing since hubs and majority types are more likely to be sampled. Even for the case where the types are observed, the maximum likelihood estimator is not explicit any more. When the types of the vertices is unobserved, we use an SAEM algorithm to maximize the likelihood. Second, we propose a different estimation strategy using new results by Athreya and R\"ollin. It consists in de-biasing the maximum likelihood estimator proposed in Daudin et al. and that ignores the biases.\end{abstract}
		
		\begin{keyword}[class=MSC]
			\kwd[Primary ]{62D05} \kwd{05C81}\kwd{05C80} \kwd{60J20}
			\kwd[; secondary ]{82C20}
		\end{keyword}
		
		\begin{keyword}
			\kwd{random graph}
			\kwd{graphon}
			\kwd{random walk exploration}
			\kwd{sampling bias}
			\kwd{EM estimation}
			\kwd{stochastic approximation expectation-maximization}
			\kwd{incomplete likelihood}
			\kwd{respondent driven sampling}
			\kwd{chain-referral survey}
		\end{keyword}
		\tableofcontents
	\end{frontmatter}

	\section{Introduction}
	
	A way to infer a random structure such as the graph of a social network and discover its properties is to explore it with random walks (e.g. \cite{riordan}). This mathematical idea can be put into practice to reveal hidden populations such as drug users by using referral chain sampling where each new person provides information on her/his contacts: see for example the snowball sampling \cite{goodman} or the `respondent-driven sampling' (RDS) introduced by Heckathorn \cite{heckathorn} (see also the PhD thesis of the second author \cite{vo-these}). These methods were first used to estimate the size of the hidden population or to infer population means, under the assumption that subjects' network degree determines their probability of being sampled, see Volz and Heckathorn \cite{volzhecathorn} (see also \cite{lirohe}). Because the inclusion probability of a subject is complicated to compute, due to the dependencies associated with the graph and the fact that the sampling should be in practice without replacement, an important numerical literature on the subject has followed (see e.g. \cite{gilehandcock2010,gilejohnstonsalganik,mouwverdery}). Gile \cite{gile2011} proposed an improved estimator for population means taking into account the without replacement sampling, and Rohe established critical threshold for the design effects \cite{rohe}. Because of privacy restrictions, the social-network information is usually only a tree, as each interviewee has been `invited' into the survey by a previously interviewed subject. Crawford, Wu and Heimer \cite{crawfordwuheimer} use a Bayesian approach to integrate over the missing edge between recruited individuals.\\
	It appears that the information gathered in chain-referral surveys can also be used in estimating the social network itself or at least properties associated with its topology. Recent surveys allow to gather connectivity information for recruited members: see for example the Rolls et al. \cite{rolls2013} and Jauffret-Roustide et al. \cite{jauffretroustide-enquete}. Interviewees are asked for a description of their contacts, and for a first name or a nickname. This information allows to reconstruct partially the social network and obtain a subgraph that is not a tree. It is then natural to wonder how much information on the total graph can be recovered from the observation of the subgraph obtained by the chain-referral sampling. Of course, biases have been emphasized as individuals of high degrees (hubs) are sampled with higher probability and `common profiles' are much more likely to be discovered (e.g. \cite{khabbazianhanlonrussekrohe}). This motivates the present paper. To fix the framework of study, we consider a particular class of random graphs, namely the Stochastic Block Models (SBM) that are popular models for social networks (see \cite{hollandlaskeyleinhardt} and the review \cite{abbe}). For this parametric model, inferring the distribution of the random graph boils down to a finite dimensional parameter estimation. Also, for simplification, we consider here a model of random walk on the continuous version of the SBM graph, namely the SBM graphon that is introduced in the next paragraph. Two estimations strategies are considered in this paper. First, we establish the likelihood of a random walk exploring this structure, and which accounts for the sampling biases. Two cases are classically considered, depending on whether the types of the visited nodes are observed or not. Even in the case of a complete observation, the maximum likelihood estimator has no explicit form. When the types of the vertices are unobserved, we adapt the Stochastic Approximation Expectation-Maximization algorithm (SAEM) as introduced in \cite{celeux:inria,kuhnlavielle}. Second, we propose a new estimation using new theoretical probabilistic results by Athreya and Roellin \cite{athreyaroellin1} who compute an exact formula for the bias. We provide a consistent estimator in the case of complete observations and a de-biasing strategy for the usual maximum likelihood estimator of Daudin et al. \cite{daudinpicardrobin} in the case where the types of the explored nodes are unknown.\\

	We consider as a toy model a Stochastic Block Model graphon with $Q$ classes. Graphons, considered here as symmetric integrable functions from $[0,1]^2$ to $\R$, can be seen as limit of dense graphs (see e.g. \cite{lovaszbook}). Recall that SBM graphs are a generalization of Erd\"os-Rényi graphs, where each node $i$ is characterized by a type, $Z_i\in \{1,\dots, Q\}$, with $Q$ the number of different possible values. The random variable (r.v.) $Z_i$ are assumed independent and identically distributed (i.i.d.) with $\P(Z_i=q)=\alpha_q>0$. Each pair of nodes $\{i,j\}$ is connected independently with a probability $\pi_{Z_i,Z_j} \in (0,1)$ that depends only on the types. \tvc{Because the graph is non oriented, the matrix with entries $\pi_{qr}$ is symmetric ($\pi_{qr}=\pi_{rq}$). Thus, for a given $Q$, the distributions of SBM graphs are parameterized by the vector
		\[\theta=(\alpha_q,\pi_{qr}, ; q,r\in \{1,\cdots Q\}).\]} When the number of vertices of the graph tends to infinity, it is known that the dense graph converges to a limiting continuous object called graphon, see e.g. \cite{borgschayeslovaszsosvesztergombi08, borgschayeslovaszsosvesztergombi12, lovaszbook}. Let us recall the definition of the SBM graphon.
	
	For the sequel, we introduce the partition of $[0,1]$ defined by
	\begin{equation}\label{def:Iq}
	I_q=\big[ A_{q-1},A_q),\qquad q\in \{1,\dots Q\}
	\end{equation}
	where for $q\in \{1,\dots Q\}$, $A_q= \sum_{k=1}^{q} \alpha_k$, with $A_0=0$ by convention.
	The SBM graphon $\kappa_{\theta}$, associated with the parameter $\theta=(\alpha_q,\pi_{qr}, ; q,r\in \{1,\cdots Q\})$, is the function from $[0,1]^2$ to $[0,1]$ defined as follows:	
	\begin{equation}\label{grapho:SBM}
	\kappa_{\theta}(x,y)=\sum_{q=1}^Q\sum_{r=1}^Q \pi_{qr}\ \ind_{I_q}(x) \ind_{I_r}(y).
	\end{equation}Heuristically, we can see $[0,1]$ as a continuum of vertices, and the graphon is the limit of the \tvc{expectation of the} adjacency matrix of the graph in the sense that $\kappa_{\theta}(x,y)$ measures the probability of connection between $x$ and $y$. \\

	We consider a random walk on the graphon $\kappa_{\theta}$, i.e. the process $X=(X_m)_{m\geq 1}$ with values in $[0,1]$ and transition kernel:
	\begin{equation}
	K_\theta(x,dy)=\frac{\kappa_{\theta}(x,y)dy}{\int_0^1 \kappa_{\theta}(x,v) dv}=\frac{\sum_{q=1}^Q \big(\sum_{r=1}^Q \pi_{qr}\ \ind_{I_r}(y) \Big) \ind_{I_q}(x) \ dy}{\sum_{q=1}^Q \Big(\sum_{r=1}^Q \pi_{qr} \alpha_r \Big) \ind_{I_q}(x)}.
	\end{equation}This random walk is the analogous of the classical random walk on a graph that jumps from a vertex to one of its neighbouring vertices chosen uniformly at random. \tvc{One simplification brought by studying the random walk on the graphon lies in the facts that (i) nodes can be visited only once and the random walk does not return to previously explored nodes, (ii) the Markov chain can not get stuck as would an avoiding random walk on a discrete graph. }\\
	From the exploration of this random walk, we can construct a subgraph of the `nodes' visited. Assume that we observe $n$ steps of the random walk, i.e. $X^{(n)} = (X_1,\dots, X_n)$. The associated path (up to its $n$th step) is a subgraph (chain) $H_n=(V_n, E_n)$ with vertices $V_n=\{X_1,\dots X_n\}$ and edges  $E_n = \cup_{m=1}^{n-1} \{X_m,X_{m+1}\}$. This chain is completed by sampling independently edges between vertices that are not already connected with probability according to their types. \tvc{We denote by $(Y_{ij} ; i,j\in \{1,\dots n\})$ the adjacency matrix of the resulting graph, i.e. $Y_{ij}=1$ if and only if $i\sim_{G_n} j$. Because the graph is non-oriented, we have $Y_{ij}=Y_{ji}$. Moreover, notice that by construction, we always have $Y_{i,i+1}=1$ for $i\in \{1,\dots n-1\}$.} Following the notation of Athreya and R\"ollin \cite{athreyaroellin1}, we denote by $G_n:=G(X^{(n)}, \kappa_\theta, H_n)$ the random graph, which is completed from $H_n$ w.r.t. the graphon $\kappa_\theta$:
	\begin{definition}\label{def:Gn}
		The vertices of $G_n=G(X^{(n)}, \kappa_\theta, H_n)$ are the nodes $X^{(n)}$, and the edges are as follows. Let $i$ and $j$ be two vertices.
		\begin{itemize}
			\item If there is an edge between $i$ and $j$ in $H_n$, $i\sim_{H_n} j$ then there is also an edge between these nodes in $G_n$: $i\sim_{G_n} j$.
			\item If there is no edge between $i$ and $j$ in $H_n$, we connect $i$ and $j$ in $G_n$ with probability $\kappa_\theta(X_i,X_j)$.
		\end{itemize}
	\end{definition}
	This subgraph $G_n$ is the RDS graph. Notice that the random walk and the subgraph $G_n$ can be defined for general graphons and not only SBM graphons (see \cite{athreyaroellin1}). \\
	
	In the rest of the paper, we assume that this is the model generating our data and that the observation corresponds to a realization of $G_n$.  \tvc{The complete data consists in:
		\begin{itemize}
			\item the chain $X^{(n)}=(X_i)_{i\in \{1,\cdots n\}}$ in $[0,1]$,
			\item the types of the successive vertices visited $Z=(Z_i)_{i\in \{1,\cdots n\}}$
			\item the adjacency matrix of $G_n$: $Y=(Y_{ij})_{i,j\in \{1,\cdots n\}}$ where $Y_{ij}=\ind_{i\sim_{G_n} j}$.
		\end{itemize}We will consider both the cases where (i) all these elements are observed, and the case where only a partial information is available: (ii) the adjacency matrix $(Y_{ij})_{i,j\in \{1,\cdots n\}}$ and the positions $X_i$'s of the vertices are observed, but not the $Z_i$'s. Notice that in the latter case, some information on the types $Z_i$'s can still be recovered since the latter depend on the $X_i$'s. (iii) only the adjacency matrix $(Y_{ij})_{i,j\in \{1,\cdots n\}}$ is observed.}\\
	Our purpose is to estimate $\theta=(\alpha_q,\pi_{qr} ; q,r\in \{1,\cdots Q\})$ using the subgraph $G_n$. In the literature, the estimation of SBM graphs has been extensively studied, but often in a framework where the number of nodes is known. In particular, variational EM approaches have been used in many cases where types are unknown, see \cite{daudinpicardrobin,tabouybarbillonchiquet,mariadassoutabouy}. The estimation of SBM graphs, when the total population size is unknown and when we only have a subgraph obtained by a chain-referral method, is not studied to our knowledge. We develop in this paper two approaches that we compare in a final numerical section (Section \ref{sec:numresults}). \\
	
	\tvc{For the first approach, it is possible to write the likelihood of $G_n$. Here, because graph is explored through an RDS random walk, our likelihood differs from the likelihoods in these papers: it accounts both on the transitions of the random walk and on the connectivity of vertices given their types. We study in Section \ref{sec:likelihood} the maximum likelihood estimator (MLE) in our setting for both cases, when the nodes types are observed (Section \ref{sec:completeobs}) or not (Section \ref{sec:likelihood-SAEM}). Even when the observation is complete, the maximum likelihood estimator does not have an explicit form. When the types are unknown, we adapt to our likelihood the variational EM approach of \cite{daudinpicardrobin}.\\
		The second approach developed in Section \ref{sec:graphon} is inspired by the recent work of Athreya and R\"ollin \cite{athreyaroellin1}. These authors showed that when we observe the random walk sufficiently long ($n\rightarrow +\infty$), the sequence of graphs $(G(H_n,\kappa_\theta))_{n\geq 1}$ converges to a biased graphon of $\kappa_\theta$. Based on their probabilistic result, a natural estimator of the biased graphon turns out to be the MLE in the `classical' case studied by \cite{daudinpicardrobin}. Based on this estimator that is not consistent in our case, we propose a new consistent estimator of $\theta$. We first detail the estimation for the case of complete observations (Section \ref{sec:completeobs2}) and then extend the variation EM of the first approach to this case (Section \ref{sec:incomplete_graphon}). Another possibility without using the information on the $X_i$'s is developed in Section \ref{sec:Referee}.
	}

	\section{Probabilistic setting}\label{sec:probsetting}

	In this section, we give some important properties of the RDS Markov chain $X^{(n)}$, in particular on its long term behaviour. Then we explain the biases that appear when estimating the graphon $\kappa_{\theta}$ from the RDS subgraph $G_n$.
	
	\subsection{Exploration by a random walk}
	
	\begin{hyp}\label{hypotheses}
		In all the paper, we consider the graphon $\kappa_{\theta}$ of an SBM graph (see \eqref{grapho:SBM}) and we assume that $\kappa_{\theta}$ is \textit{connected}, i.e. that for all measurable subset $A\subset [0,1]$ such that its Lebesgue measure $|A|\in (0,1)$,
		\tvc{\begin{align}0< & \int_A\int_{A^c} \kappa_{\theta}(x,y)dx\ dy
			=  \sum_{q=1}^Q \sum_{r=1}^Q \pi_{qr} |I_q\cap A| \ |I_r\cap A^c|,\label{eq:1}
			\end{align}using \eqref{grapho:SBM}.}	\end{hyp}

	\tvc{Let us now introduce some notations:
		\begin{equation}\bar{\pi}_q=\sum_{r=1}^Q \pi_{qr}\alpha_r ,\qquad \bar{\pi}=\sum_{q=1}^Q \bar{\pi}_q\alpha_q=\sum_{q=1}^Q\sum_{r=1}^Q \pi_{qr}\alpha_q\alpha_r. \label{notation:pibar}\end{equation}
		The quantity $\bar{\pi}_q$ corresponds to the mean connectivity of a node of class $q$ and $\bar{\pi}$ corresponds to the mean connectivity of a node chosen uniformly in $[0,1]$.
	}
	
	\begin{proposition}Under Assumptions \ref{hypotheses}, the random walk $X=(X_n)_{n\geq 1}$ admits a unique invariant probability measure
		\begin{align}
		m(dx)= &  \frac{\int_0^1 \kappa_{\theta} (x,v)dv}{\int_0^1 \int_0^1 \kappa_{\theta}(u,v) du\ dv} \ dx
		=\frac{\sum_{q=1}^Q \bar{\pi}_q \ind_{I_q}(x) \ dx}{\bar{\pi}}. \label{eq:pi}
		\end{align}
	\end{proposition}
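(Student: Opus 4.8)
The plan is to establish the statement in three stages: invariance, a reduction to a finite Markov chain, and uniqueness via irreducibility.

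First I would check that $m$ is an invariant probability measure by verifying the detailed balance (reversibility) relation, which is the quickest route. Writing out the product of $m$ and the kernel, the factor $\int_0^1 \kappa_{\theta}(x,v)\,dv$ coming from the denominator of $K_\theta$ cancels the numerator of $m$, leaving
\[
m(dx)\,K_\theta(x,dy)=\frac{\kappa_{\theta}(x,y)}{\int_0^1\int_0^1\kappa_{\theta}(u,v)\,du\,dv}\,dx\,dy,
\]
an expression that is symmetric in $x$ and $y$ precisely because the graphon $\kappa_{\theta}$ is symmetric. Hence $m(dx)K_\theta(x,dy)=m(dy)K_\theta(y,dx)$, which is reversibility and in particular yields $m K_\theta=m$. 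That $m$ is a probability measure follows from $\int_0^1\big(\int_0^1\kappa_{\theta}(x,v)\,dv\big)\,dx=\int_0^1\int_0^1\kappa_{\theta}(u,v)\,du\,dv$, and the second, explicit SBM expression in \eqref{eq:pi} comes from $\int_0^1\kappa_{\theta}(x,v)\,dv=\sum_{q}\bar{\pi}_q\,\ind_{I_q}(x)$ together with $\int_0^1\int_0^1\kappa_{\theta}=\bar{\pi}$, using the notation \eqref{notation:pibar}.

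The crux is uniqueness, and here I would exploit the block structure of $\kappa_{\theta}$. For $x\in I_q$ the kernel reads $K_\theta(x,dy)=\bar{\pi}_q^{-1}\sum_{r}\pi_{qr}\,\ind_{I_r}(y)\,dy$, so it depends on $x$ only through its block $q$ and, crucially, its density is constant on each block $I_r$. Consequently, after a single step the law of the chain is absolutely continuous and uniform within each block, so any invariant measure $\mu$ must be of the form $\mu(dy)=\sum_{r}\alpha_r^{-1}\mu(I_r)\,\ind_{I_r}(y)\,dy$; that is, $\mu$ is determined entirely by its vector of block masses $\nu_q:=\mu(I_q)$. Projecting onto the classes reduces the problem to a finite Markov chain on $\{1,\dots,Q\}$ with transition matrix $P_{qr}=K_\theta(x,I_r)=\pi_{qr}\alpha_r/\bar{\pi}_q$ for $x\in I_q$, and the invariance of $\mu$ becomes the finite stationarity equation $\nu_r=\sum_{q}\nu_q P_{qr}$.

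Finally I would translate the connectedness Assumption \ref{hypotheses} into irreducibility of $P$. Applying \eqref{eq:1} to a set $A=\bigcup_{q\in S}I_q$ for a proper nonempty $S\subset\{1,\dots,Q\}$ gives $\sum_{q\in S}\sum_{r\notin S}\pi_{qr}\alpha_q\alpha_r>0$, so an edge $\pi_{qr}>0$ always crosses any nontrivial partition of the classes; since $P_{qr}>0\iff\pi_{qr}>0$, the matrix $P$ is irreducible. An irreducible finite stochastic matrix has a unique stationary distribution, which pins down $(\nu_q)_q$ and hence $\mu$; comparing with $m$, whose block masses $\bar{\pi}_q\alpha_q/\bar{\pi}$ are readily checked to solve $\nu_r=\sum_q\nu_q P_{qr}$ using the symmetry $\pi_{qr}=\pi_{rq}$, gives $\mu=m$. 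I expect the reduction to the finite block chain and the translation of the integral condition \eqref{eq:1} into combinatorial irreducibility of $P$ to be the main point, whereas the invariance and normalization computations are routine.
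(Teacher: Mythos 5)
Your proof is correct. Note, however, that the paper does not actually write out an argument: it simply points to Proposition 4.1 of Athreya and R\"ollin, which establishes existence and uniqueness of the invariant measure for random walks on general connected graphons, and remarks that the SBM case is ``easy to prove.'' Your proposal is precisely the self-contained elementary argument that this remark alludes to, and it is sound at every step: reversibility of $m$ follows from the symmetry $\kappa_\theta(x,y)=\kappa_\theta(y,x)$; the observation that $\mu=\mu K_\theta$ forces any invariant $\mu$ to have a density that is constant on each block $I_r$ correctly collapses the problem to the finite chain with transition matrix $P_{qr}=\pi_{qr}\alpha_r/\bar{\pi}_q$; and taking $A=\bigcup_{q\in S}I_q$ in \eqref{eq:1} (legitimate since all $\alpha_q>0$, so $|A|\in(0,1)$) shows the support graph of $(\pi_{qr})$ is connected, hence $P$ is irreducible and has a unique stationary vector, which you correctly identify as $\nu_q=\alpha_q\bar{\pi}_q/\bar{\pi}$ using $\pi_{qr}=\pi_{rq}$. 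What your route buys is an explicit, finite-dimensional proof tailored to the block structure, with no machinery beyond Perron--Frobenius for finite chains; what the paper's citation buys is generality, since Athreya and R\"ollin's argument applies to arbitrary connected graphons, where no such reduction to a finite state space is available.
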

	
	The general proof is given in \cite[Prop. 4.1]{athreyaroellin1} but for the case of SBM graphons, the result is easy to prove.\\
	
	\tvc{From expression \eqref{eq:pi}, we see that for $q\in \{1,\cdots Q\}$, the measure of the class $q$ with respect to $m(dx)$ is:
		\begin{equation}\label{def:alphatilde}\widetilde{\alpha}_q:=m(I_q)=\alpha_q \frac{\bar{\pi}_q}{\bar{\pi}}.\end{equation}
		So, if $\bar{\pi}_q>\bar{\pi}$, $\widetilde{\alpha}_q>\alpha_q$ and
		the stationary measure $m(dx)$ puts more weight on the interval $I_q$ which has a larger than average connectivity, compared with the Lebesgue measure. If $\bar{\pi}_1=\cdots \bar{\pi}_Q=\bar{\pi}$ are all equal, we have $\widetilde{\alpha}_q=\alpha_q$ for all $q\in \{1,\cdots Q\}$ and $m(dx)$ is the uniform measure on $[0,1]$ by \eqref{eq:pi}. Otherwise, we expect biases in how the graphon $\kappa_{\theta}$ is discovered by $G_n$.}
	
	\subsection{Convergence of dense graphs}
	
	We are interested in the case where $n\rightarrow +\infty$. Then, the (dense) RDS graph $G_n$ might converge to a graphon, and it is natural to compare the possible limit to the graphon $\kappa_{\theta}$ on which the random walk moves. Let us recall briefly some topological facts. We refer the interested reader to \cite{lovaszbook}.\\

	Let us give first some notations. For integers $n$ and $k\leq n$, $\lbrac 1,n\rbrac=\{1,2\cdots n\}$ and $(n)_k=n(n-1)\cdots (n-k+1)$.
	For a graph $G$, $E(G)$ denotes the edges of $G$ and $i\sim_G j$ means that $\{i,j\}\in E(G)$.
	We can define the subgraph $F$ density in $G$ by:
	\begin{equation}\label{eq:tFG}
	t(F,G)=\frac{\# \{\mbox{injections from }F\mbox{ to }G\}}{(n)_k}=\frac{1}{(n)_k}\sum_{(i_1,\cdots i_k)\in \lbrac 1,n\rbrac}\prod_{\{\ell,\ell'\}\in E(F)}\ind_{i_\ell \sim_G i_{\ell'}}
	\end{equation}where $\sum_{(i_1,\cdots i_k)\in \lbrac 1,n\rbrac}$ is a sum ranging over all vectors $(i_1,\cdots i_k)$ with mutually different coordinates in $\lbrac 1,n\rbrac$.
	This notion of subgraph density can be generalized to a graphon $\kappa$ by:
	\begin{equation}
	t(F,\kappa)= \int_{[0,1]^k} \prod_{\{\ell,\ell'\}\in E(F)}\kappa(x_\ell, x_{\ell'}) dx_1\cdots dx_k.
	\end{equation}
	Let $\mathcal{F}$ denote the class of isomorphism classes on finite graphs and let $(F_i)_{i\geq 1}$ be a particular enumeration of $\mathcal{F}$. Then, the distance of two graphs $G$ and $G'$ is:
	\begin{equation}\label{eq:dsub}
	d_{\sub}(G,G')=\sum_{i\geq 1} \frac{1}{2^i} \big|t(F_i,G)-t(F_i,G')\big|
	\end{equation}
	The convergence of the large graphs to graphons can be expressed with this distance \cite[Chapter 11]{lovaszbook}.

	\subsection{Biases in the discovery of $\kappa_{\theta}$}
	
	\tvc{	Let us denote by $\Gamma$ the cumulative distribution function of $m(dx)$:
		\begin{align}\label{def:Gamma}
		\Gamma(x)= &  \frac{\sum_{q=1}^Q \bar{\pi}_q \big[\min\big(\alpha_q, \ x-A_{q-1}\big)\big]_+}{\bar{\pi}}\nonumber\\
		= & \left\{\begin{array}{ll}
		\bar{\pi}_1 x & \mbox{ if }x\in I_1,\\
		\widetilde{A}_{q-1}+\bar{\pi}_q (x- \widetilde{A}_{q-1}) & \mbox{ if }x\in I_q,\\
		\end{array}\right.
		\end{align}where $\widetilde{A}_q=\sum_{k=1}^q \widetilde{\alpha}_k$. Notice that $\Gamma$ is a continuous piecewise affine function that maps $[A_{q-1},A_q)$ to $[\widetilde{A}_{q-1},\widetilde{A}_q)$.}

	Athreya and R\"ollin \cite{athreyaroellin1} have proved that the graphon discovered by the RDS is biased:	
	\begin{proposition}[Corollary 2.2 \cite{athreyaroellin1}] \label{th:athreya}We have under Assumptions \ref{hypotheses} that:
		\[\lim_{n\rightarrow +\infty} d_{\sub}\big(G_n,\kappa_{\Gamma^{-1}}\big)=0,\]
		where the generalised inverse of $\Gamma$ is
		\begin{align*}
		\Gamma^{-1} (v) =  \inf \{ u \in [0,1]: \Gamma(u) \geq v \},
		\end{align*}
		and where for all $x,y\in [0,1]$,
		\begin{equation}
		\kappa_{\Gamma^{-1}}(x,y)=\kappa\big(\Gamma^{-1}(x),\Gamma^{-1}(y)\big).
		\end{equation}
	\end{proposition}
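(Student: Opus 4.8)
The plan is to prove convergence of each subgraph density and then deduce the convergence of $d_{\sub}$. Since
\[
d_{\sub}(G_n,\kappa_{\Gamma^{-1}})=\sum_{i\geq 1}\frac{1}{2^i}\,\big|t(F_i,G_n)-t(F_i,\kappa_{\Gamma^{-1}})\big|,
\]
with each summand bounded by $1$ and $\sum_i 2^{-i}<\infty$, it suffices by dominated convergence to show, for every fixed finite simple graph $F$ on vertex set $\{1,\dots,k\}$, that $t(F,G_n)\to t(F,\kappa_{\Gamma^{-1}})$ (in probability, say). First I would identify the target. Because $\Gamma$ is the cumulative distribution function of $m$, the generalised inverse $\Gamma^{-1}$ pushes the Lebesgue measure on $[0,1]$ forward to $m(dx)$; hence by the change of variables $u_\ell=\Gamma(x_\ell)$ and the definition $\kappa_{\Gamma^{-1}}(u,v)=\kappa_\theta(\Gamma^{-1}(u),\Gamma^{-1}(v))$,
\[
\int_{[0,1]^k}\prod_{\{\ell,\ell'\}\in E(F)}\kappa_\theta(x_\ell,x_{\ell'})\,m(dx_1)\cdots m(dx_k)=t(F,\kappa_{\Gamma^{-1}}).
\]
So it remains to show that $t(F,G_n)$ converges to this left-hand integral.

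Next I would split the two sources of randomness. Conditionally on the trajectory $X^{(n)}=(X_1,\dots,X_n)$, the edges of $G_n$ are independent: each non-consecutive pair $\{i,j\}$ is present with probability $\kappa_\theta(X_i,X_j)$, while the $n-1$ consecutive pairs $\{i,i+1\}$ are forced ($Y_{i,i+1}=1$). For the \emph{concentration} step, note that given $X^{(n)}$, $t(F,G_n)$ is a bounded function of the conditionally independent edge variables; changing a single edge alters $t(F,G_n)$ by $O(n^{-2})$ over $O(n^2)$ edges, so by a variance bound (or McDiarmid's inequality) $\mathrm{Var}\big(t(F,G_n)\mid X^{(n)}\big)\to 0$ and $t(F,G_n)-\E\big[t(F,G_n)\mid X^{(n)}\big]\to 0$. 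It then remains to treat the conditional mean, which equals $(n)_k^{-1}\sum \prod_{\{\ell,\ell'\}\in E(F)} p_{i_\ell i_{\ell'}}$, the sum over $k$-tuples of distinct indices, with $p_{ij}=\kappa_\theta(X_i,X_j)$ unless $|i-j|=1$ where $p_{ij}=1$. Tuples containing a consecutive pair number $O(n^{k-1})=o((n)_k)$ and contribute negligibly, so up to $o(1)$ the conditional mean coincides with the multilinear Markov average $(n)_k^{-1}\sum \prod_{\{\ell,\ell'\}\in E(F)}\kappa_\theta(X_{i_\ell},X_{i_{\ell'}})$.

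The heart of the argument is to show this average converges to $\int_{[0,1]^k}\prod\kappa_\theta(x_\ell,x_{\ell'})\,m(dx_1)\cdots m(dx_k)$. The key structural fact is that the chain is reversible with respect to $m$: indeed
\[
m(dx)\,K_\theta(x,dy)=\frac{\kappa_\theta(x,y)}{\int_0^1\!\int_0^1\kappa_\theta(u,v)\,du\,dv}\,dx\,dy,
\]
which is symmetric in $(x,y)$ since $\kappa_\theta$ is symmetric. Assumption \ref{hypotheses} (connectedness) gives irreducibility and hence ergodicity, so the statement is a law of large numbers for a $U$-statistic built from a single ergodic trajectory: the empirical distribution of $k$-tuples of distinct indices along the path converges to $m^{\otimes k}$ tested against the bounded functional $\prod\kappa_\theta$.

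The main obstacle is precisely this last step: the $k$ sampled positions $X_{i_1},\dots,X_{i_k}$ are \emph{not} independent draws from $m$ but come from one correlated trajectory, so the average does not factorize term by term. I would control it by a moment/mixing argument, bounding the covariances between widely separated indices using the spectral gap of the reversible kernel — which is available here because the SBM graphon is piecewise constant and bounded below on its support, yielding uniform ergodicity. Decorrelation of distant indices then forces the multi-index average to factorize into the product integral in the limit. By comparison, the concentration step and the change-of-variables identification of the target are routine.
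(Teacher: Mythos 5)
The paper does not actually prove this proposition: it is imported verbatim as Corollary 2.2 of Athreya and R\"ollin \cite{athreyaroellin1}, so there is no internal argument to compare yours against. Your sketch is, as far as it goes, a correct proof route for the SBM case, and its architecture (reduction to a single subgraph density $t(F,\cdot)$ by dominated convergence, conditional independence of the edges given $X^{(n)}$, McDiarmid concentration, negligibility of the $O(n^{k-1})$ tuples meeting a forced consecutive edge, and identification of the target by pushing Lebesgue measure forward through $\Gamma^{-1}$) is sound; it is also the same toolbox the paper itself deploys where it does give proofs, namely for Propositions \ref{eq:estimatorsAR} and \ref{prop:GnChin}. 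The one substantive comment is that the step you single out as the heart of the argument --- convergence of $(n)_k^{-1}\sum \prod \kappa_\theta(X_{i_\ell},X_{i_{\ell'}})$ to $\int \prod \kappa_\theta \, dm^{\otimes k}$ --- does not need the spectral-gap/covariance machinery you invoke, in the SBM setting. Since $\kappa_\theta$ is piecewise constant, the integrand expands as a finite sum over block labels $(q_1,\dots,q_k)$ of $\prod_{\{\ell,\ell'\}\in E(F)}\pi_{q_\ell q_{\ell'}}\prod_{h=1}^k \ind_{I_{q_h}}(x_h)$; up to the $O(1/n)$ contribution of tuples with repeated indices, your multilinear average is therefore a polynomial in the finitely many quantities $\frac{1}{n}\sum_{i=1}^n \ind_{X_i\in I_q}$, each of which converges almost surely to $\widetilde{\alpha}_q=m(I_q)$ by the ergodic theorem (the Doeblin minorization you correctly note makes the chain uniformly ergodic, so the ergodic theorem applies from any initial law). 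This both shortens the argument and upgrades your convergence in probability to the almost-sure statement. What your heavier decorrelation argument buys is generality: it is essentially what is required to treat the general, non-piecewise-constant graphons covered by Athreya and R\"ollin's theorem, which is the level of generality at which the paper cites the result.
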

	
	This proposition, that is true not only for SBM graphons but also in more general cases, as developed in \cite{athreyaroellin1}, says that the topology of the subgraph discovered by the RDS is biased compared with the true underlying structure ($\kappa$) because the random walk visits more likely the nodes with high degrees (hubs) and the frequent types.\\

	\tvc{In the case of an SBM graphon parameterized by $\theta=(\alpha_q, \pi_{qr} ; q,r\in \{1,\cdots Q\})$,
		and under Assumption \eqref{hypotheses}, $\Gamma$ is a one-to-one map and $\Gamma^{-1}$ is its usual inverse function: it is here the piecewise affine function that maps the interval $[\widetilde{A}_{q-1},\widetilde{A}_q)$ to $[A_{q-1},A_q)$. We have here:
		\begin{equation}
		\kappa_{\Gamma^{-1}}(x,y)=\kappa_{\widetilde{\theta}}(x,y),
		\end{equation}with the notation \eqref{grapho:SBM} and where
		\begin{equation}\widetilde{\theta}=(\widetilde{\alpha}_q ,\pi_{qr} ; q,r\in \{1,\cdots Q\}).
		\end{equation}
		For SBM graphons, there will be no bias when $\kappa_{\widetilde{\theta}}=\kappa_\theta$, \textit{i.e.} when for all $q\in \{1,\cdots Q\}$, $\widetilde{\alpha}_q=\alpha_q$.
	}
	\begin{example}
		When $Q = 2$, the graphon is given:
		\begin{align}
		\kappa_{\theta}(x,y) = \left\{ \begin{array}{ll} \pi_{11}, & 0 \leq x, y \leq \alpha;\\ \pi_{12}, & \alpha< x \leq 1 \quad \text{or} \quad \alpha < y \leq 1;\\
		\pi_{22}, & \text{otherwise}. \end{array} \right. \label{eq:2}
		\end{align}
		This function is represented in Fig. \ref{fig:graphonQ2}
		\begin{figure}[!ht]
			\centering
			\includegraphics[height=4.5cm,width=7cm,trim=0cm 1cm 0cm 1cm]{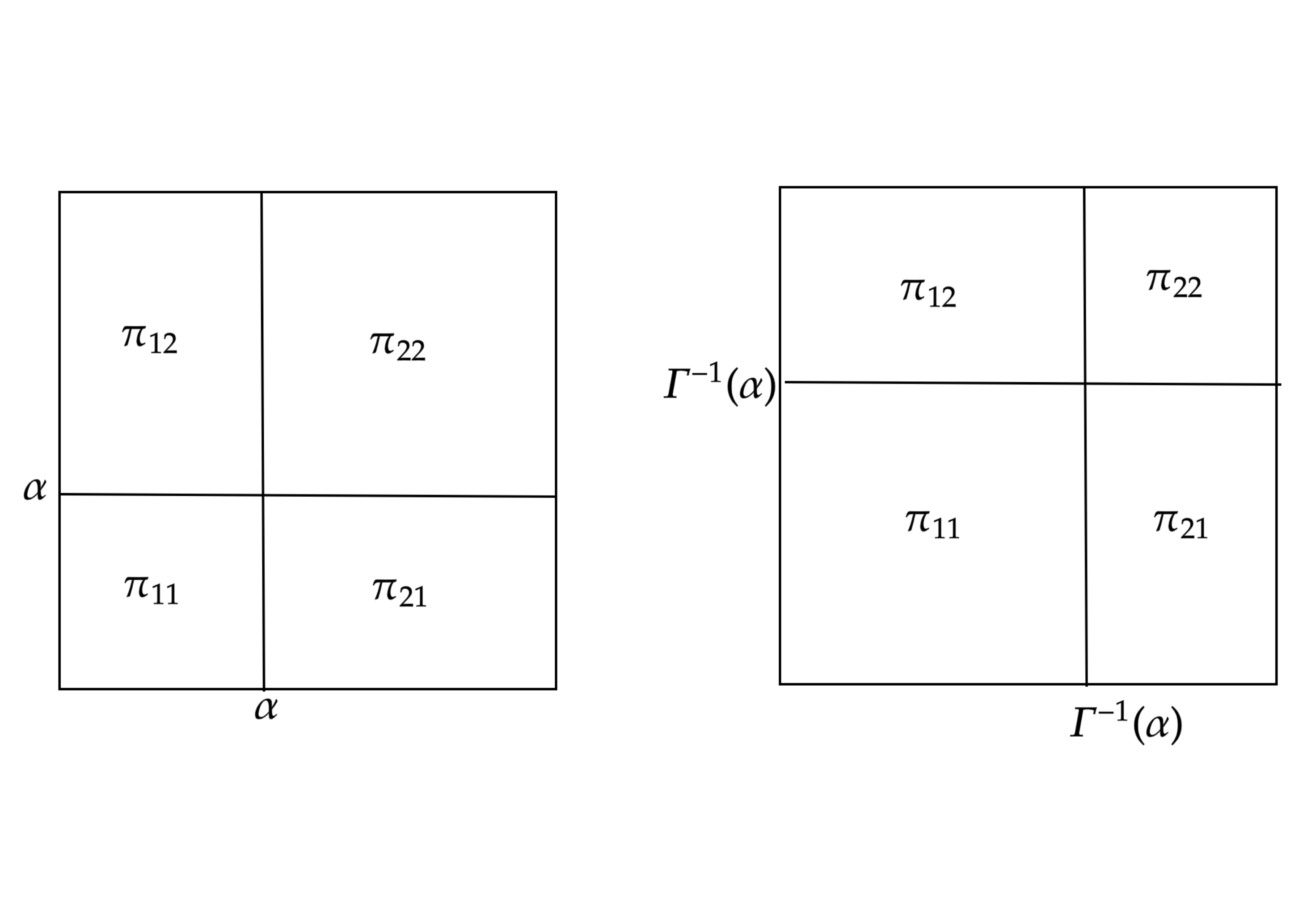}
			\caption{{\textit Left: Function $\kappa_{\theta}(x,y)$ for an SBM graphon with $Q=2$ classes. Right: Distorted graphon $\kappa_{\widetilde{\theta}}$ as discovered by the random walk. Notice that the parameters $\pi_{qr}$ are unchanged, but the weights of the classes are modified from $(\alpha,1-\alpha)$ to $(\Gamma(\alpha),1-\Gamma(\alpha))$. }}\label{fig:graphonQ2}
		\end{figure}
		
		The invariant probability measure is:
		\begin{align*}
		m(dx) = \frac{(\pi_{11}\alpha + \pi_{12}(1-\alpha)) \ind_{x \in [0, \alpha]}(x) + (\pi_{12}\alpha + \pi_{22} (1- \alpha)) \ind_{x \in (\alpha, 1]}(x)}{\pi_{11}\alpha^2 + 2\pi_{12} \alpha(1- \alpha) + \pi_{22} (1- \alpha)^2}dx.
		\end{align*}
		As a result (see Fig. \ref{fig:graphonQ2}), the bias graphon $\kappa_{\widetilde{\theta}}$ corresponds to the SBM graphon \eqref{eq:2} where the weights of the class 1 is changed from $\alpha$ to
		\begin{align}
		\Gamma(\alpha) = \frac{(\pi_{11}\alpha + \pi_{12}(1- \alpha))\alpha}{\pi_{11}\alpha^2 + 2\pi_{12} \alpha(1- \alpha) + \pi_{22} (1- \alpha)^2}.
		\end{align}
		In this particular case, it can be seen that $\Gamma(\alpha)=\alpha$ when $(1-\alpha)(\pi_{12}-\pi_{22})=\alpha (\pi_{12}-\pi_{11})$. This is satisfied for example when $\pi_{11}=\pi_{12}=\pi_{22}$ (Erd\"{o}s-Rényi) or when $\alpha=1/2$ and $\pi_{11}=\pi_{22}$ (both types are symmetric).
	\end{example}
	
	\subsection{Empirical cumulative distribution}

	As seen in the previous paragraph, the bias linked with the discovery of the graphon $\kappa_{\theta}$ by the RDS subgraph $G_n$ is expressed in term of the cumulative distribution $\Gamma$ of the stationary distribution $m$ of $X^{(n)}$. In the sequel, the empirical cumulative distribution of $m$ will be useful and we recall here some facts:
	\begin{equation}\label{eq:Gamman}
	\Gamma_n(x)=\frac{1}{n}\sum_{i=1}^n \ind_{X_i\leq x}\qquad \mbox{ and }\qquad \Gamma_n^{-1}(y)=\inf\big\{x\in [0,1]\ : \ \Gamma_n(x)\geq y\big\}.
	\end{equation}
	
	\begin{lemma}$\Gamma_n$ and $\Gamma_n^{-1}$ converge a.s. uniformly to $\Gamma$ and $\Gamma^{-1}$ respectively.
	\end{lemma}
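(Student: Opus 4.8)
The plan is to reduce the uniform statement to a pointwise almost sure convergence, then upgrade it by the classical Glivenko--Cantelli argument, using the monotonicity of the empirical distribution functions and the continuity of $\Gamma$; the convergence of the inverses will follow at the end because $\Gamma$ is bi-Lipschitz.

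First I would establish the pointwise statement: for each fixed $x\in[0,1]$, $\Gamma_n(x)\to\Gamma(x)$ almost surely. Writing $\Gamma_n(x)=\frac1n\sum_{i=1}^n\ind_{X_i\leq x}$, this is the ergodic average of the bounded function $\ind_{[0,x]}$ along the trajectory of $X$, and $m(\ind_{[0,x]})=\Gamma(x)$ by definition of $\Gamma$. To obtain an almost sure law of large numbers valid for an arbitrary initial distribution, I would observe that for $x\in I_q$ the transition kernel $K_\theta(x,dy)$ has density $k_\theta(x,y)=\kappa_\theta(x,y)/\bar\pi_q$, which is bounded below by $\delta:=\min_{q,r}\pi_{qr}/\max_q\bar\pi_q>0$ (positivity of every $\pi_{qr}$ comes from $\pi_{qr}\in(0,1)$), so that $K_\theta(x,A)\geq\delta\,|A|$ uniformly in $x$. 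This Doeblin minorization makes the chain uniformly ergodic, and the ergodic theorem gives $\Gamma_n(x)\to\Gamma(x)$ a.s.\ for each $x$, hence on a single almost sure event simultaneously for all rational $x$.

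Next I would upgrade this to uniform convergence. Fix $\varepsilon>0$. Since $\Gamma$ is continuous and maps $[0,1]$ onto $[0,1]$, choose rational points $0=x_0<x_1<\cdots<x_K=1$ with $\Gamma(x_j)-\Gamma(x_{j-1})\leq\varepsilon$. On the almost sure event where $\Gamma_n(x_j)\to\Gamma(x_j)$ for all $j$, pick $N$ with $|\Gamma_n(x_j)-\Gamma(x_j)|\leq\varepsilon$ for $n\geq N$ and all $j$. For arbitrary $x\in[x_{j-1},x_j]$, monotonicity of $\Gamma_n$ and $\Gamma$ gives $\Gamma_n(x)\leq\Gamma_n(x_j)\leq\Gamma(x_j)+\varepsilon\leq\Gamma(x)+2\varepsilon$ and, symmetrically, $\Gamma_n(x)\geq\Gamma_n(x_{j-1})\geq\Gamma(x_{j-1})-\varepsilon\geq\Gamma(x)-2\varepsilon$, whence $\sup_x|\Gamma_n(x)-\Gamma(x)|\leq 2\varepsilon$ for $n\geq N$. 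This yields the a.s.\ uniform convergence of $\Gamma_n$ to $\Gamma$.

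Finally, for the inverses I would exploit that $\Gamma$ is piecewise affine with slopes $\bar\pi_q/\bar\pi>0$, hence bi-Lipschitz; in particular $c\,|u-u'|\leq|\Gamma(u)-\Gamma(u')|$ with $c:=\min_q\bar\pi_q/\bar\pi$. Setting $u=\Gamma_n^{-1}(y)$ and $w=\Gamma^{-1}(y)$, so $\Gamma(w)=y$, this gives $|\Gamma_n^{-1}(y)-\Gamma^{-1}(y)|\leq c^{-1}|\Gamma(\Gamma_n^{-1}(y))-y|$. Since the $X_i$ are a.s.\ distinct, the jumps of $\Gamma_n$ are exactly $1/n$, so $y\leq\Gamma_n(\Gamma_n^{-1}(y))\leq y+1/n$; combined with $|\Gamma(\Gamma_n^{-1}(y))-\Gamma_n(\Gamma_n^{-1}(y))|\leq\|\Gamma_n-\Gamma\|_\infty$ one gets $\|\Gamma_n^{-1}-\Gamma^{-1}\|_\infty\leq c^{-1}\big(\|\Gamma_n-\Gamma\|_\infty+1/n\big)\to 0$ a.s. The main obstacle is the first step: one must justify an almost sure strong law (not merely convergence in probability, nor convergence only under the stationary initial law) for the chain started from an arbitrary point, which is precisely what the uniform Doeblin minorization provides. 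The remaining two steps are the standard Glivenko--Cantelli and inverse-function arguments and are routine once continuity and bi-Lipschitzness of $\Gamma$ are in hand.
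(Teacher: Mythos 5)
Your proof is correct and follows essentially the same route as the paper's: pointwise almost sure convergence via the ergodic theorem, an upgrade to uniform convergence by the Glivenko--Cantelli chaining argument, and the same inverse-function bound $|\Gamma_n^{-1}(y)-\Gamma^{-1}(y)|\leq C\,|\Gamma(\Gamma_n^{-1}(y))-y|$ controlled by the fact that the jumps of $\Gamma_n$ have size $1/n$. The only differences are that you make explicit what the paper leaves implicit --- the Doeblin minorization justifying the almost sure ergodic theorem from an arbitrary initial law, the monotonicity argument replacing a bare citation of Glivenko--Cantelli (appropriate here since the $X_i$ are Markov rather than i.i.d.), and the explicit inverse-Lipschitz constant $c=\min_q\bar{\pi}_q/\bar{\pi}$ --- which strengthens rather than changes the method.
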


	\begin{proof}The almost sure pointwise convergence of $\Gamma_n$ to $\Gamma$ is a consequence of the ergodic theorem. Then, the a.s. uniform convergence is obtain by the Glivenko-Cantelli theorem. \\
		Let us prove the uniform convergence of $\Gamma^{-1}_n$ to $\Gamma^{-1}$.
		Because all the $\alpha_q$'s are positive, $\Gamma$ is a nondecreasing and piecewise affine bijection and the inverse bijection $\Gamma^{-1}$ is also nondecreasing and piecewise affine.
		Let $\varepsilon>0$ and $n_0\in \N$ sufficiently large so that for all $n\geq n_0$, $\|\Gamma_n-\Gamma\|_\infty\leq \varepsilon$. Let $y\in [0,1]$. For $n\geq n_0$,
		\begin{align*}
		\big|\Gamma_n^{-1}(y)-\Gamma^{-1}(y)\big| \leq  & C \big|\Gamma(\Gamma_n^{-1}(y))-y\big|.
		\end{align*}Because the jumps of $\Gamma_n$ are a.s. of size $1/n$, we necessarily have that $y-\varepsilon\leq \Gamma(\Gamma_n^{-1}(y))\leq y+\varepsilon+\frac{1}{n}$. Thus,
		\begin{align*}
		\big|\Gamma_n^{-1}(y)-\Gamma^{-1}(y)\big| \leq  & C \big(\frac{1}{n}+\varepsilon\big),
		\end{align*}
		which proves the uniform convergence of $\Gamma_n^{-1}$ to $\Gamma^{-1}$.
	\end{proof}

	\section{Likelihood estimation}\label{sec:likelihood}
	
	In this section, we write the likelihood of $G_n$ and compute the MLE of the parameters $\theta$ in Section \ref{sec:completeobs}, when we have complete observations: $(Z_i, Y_{ij} ; i,j\in \{1,\cdots n\})$ are available. Here our likelihood is specific to the RDS exploration. The MLE does not have an explicit formula and we explain how to compute it numerically. Then in Section \ref{sec:likelihood-SAEM}, we study the case where the types $Z=(Z_1, \cdots, Z_n)$ of the nodes are unobserved. \\
	Notice that the estimation in this Section \ref{sec:likelihood} makes only use of the connectivity information carried by the random variables $Y_{ij}$. The estimators here do not depend on the positions $X_i$. The types $Z$ may be known or unobserved.  \\

	Let us introduce some notations. We define by $N^q_n$, $q \in \{1,..., Q\}$ the number of vertices of type $q$ sampled by the Markov chain. For $q, r\in \{1,...,Q\}$ we also define by:
	\begin{align*}
	& N_n^{q\leftrightarrow r}= \Card\big\{(i,j)\quad  | \quad   Z_i=q,\ Z_j=r, \ Y_{i,j}=1\big\};\\
	& N_n^{q\nleftrightarrow r}= \Card\big\{(i,j)\quad  | \quad  Z_i=q,\ Z_j=r,\ Y_{i,j}=0\big\}
	\end{align*}
	the number of couples of types $(q,r)$ that are connected (resp. not connected).

	\subsection{Complete observations} \label{sec:completeobs}
	
	Assume that we observe a subset of explored nodes discovered by the RDS, with their classes and connections: $(Z_i, Y_{ij} ;  i < j)\in \{1,\cdots Q\}^n\times \{0,1\}^{n(n-1)/2}$.\\

	\begin{proposition}
		Recall that $\theta=(\alpha_q,\pi_{qr} ; 1\leq q\leq r\leq Q)$. The complete likelihood of the observations is
\tvc{		\begin{align} \label{eq:likelihood-in-fctofN}
		\mathcal{L}(Z, Y, \theta) &= \prod_{1\leq q\leq r \leq Q}\pi_{qr}^{N^{q\leftrightarrow r}} (1-\pi_{qr})^{N^{q \nleftrightarrow r}}
		\times \prod_{q = 1}^{Q} \frac{\alpha_q^{N_n^q}}{(\sum_{q'=1}^{Q} \pi_{qq'} \alpha_{q'})^{N_n^q - \ind_{Z_n = q}}}.
		\end{align}}Notice that in the above formula, the notation $\pi_{qq'}$ is a shortcut for $\pi_{\min(q,q'),\max(q,q')}$.
	\end{proposition}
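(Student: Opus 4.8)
The plan is to assemble the complete-data likelihood from the three mechanisms that generate $(X^{(n)},Z,Y)$: the law of the exploring walk $X^{(n)}$, the fact that the types are read off the positions through $Z_i=q\iff X_i\in I_q$, and the conditional law of the adjacency matrix $Y$ given the positions prescribed by Definition \ref{def:Gn}. Since $Z$ is a deterministic function of $X^{(n)}$, the quantity $\mathcal{L}(Z,Y,\theta)$ is really the joint law of $(X^{(n)},Y)$ after the positions have been integrated out class by class, and the whole computation reduces to checking that this integration produces exactly the three factors in \eqref{eq:likelihood-in-fctofN}.

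First I would write the density of the trajectory $(X_1,\dots,X_n)$ with respect to Lebesgue measure on $[0,1]^n$. Starting $X_1$ from the uniform law on $[0,1]$, so that its contribution is the constant Lebesgue density, and using the kernel $K_\theta$, this density is $\prod_{i=1}^{n-1}K_\theta(X_i,X_{i+1})$. By \eqref{grapho:SBM} and the definition of $K_\theta$, on the event $\{X_i\in I_{q_i}\}$ one has $\kappa_\theta(X_i,X_{i+1})=\pi_{q_iq_{i+1}}$ and $\int_0^1\kappa_\theta(X_i,v)\,dv=\bar\pi_{q_i}$, so the trajectory density equals $\prod_{i=1}^{n-1}\pi_{q_iq_{i+1}}/\bar\pi_{q_i}$ and depends on the positions only through the types. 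Conditionally on the positions, the consecutive edges $\{i,i+1\}$ are forced to be present (they form $H_n$), while every non-consecutive pair $\{i,j\}$ is opened independently with probability $\kappa_\theta(X_i,X_j)=\pi_{q_iq_j}$, so the conditional likelihood of $Y$ is $\prod_{i=1}^{n-1}\ind_{Y_{i,i+1}=1}\prod_{|i-j|\ge2}\pi_{q_iq_j}^{Y_{ij}}(1-\pi_{q_iq_j})^{1-Y_{ij}}$. Multiplying these two pieces and integrating each $X_i$ over its class interval $I_{q_i}$ is harmless, since the integrand no longer depends on the positions, and produces one factor $|I_{q_i}|=\alpha_{q_i}$ per node, i.e. the numerator $\prod_q\alpha_q^{N_n^q}$.

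It then remains to reorganize the $\pi$-factors by type. The normalizing factors give $\prod_{i=1}^{n-1}\bar\pi_{q_i}^{-1}=\prod_q\bar\pi_q^{-(N_n^q-\ind_{Z_n=q})}$, because type $q$ is visited $N_n^q-\ind_{Z_n=q}$ times among $X_1,\dots,X_{n-1}$ (the last visited node carries no outgoing transition); substituting $\bar\pi_q=\sum_{q'}\pi_{qq'}\alpha_{q'}$ yields the denominator in \eqref{eq:likelihood-in-fctofN}. For the connection factors the key observation is that a consecutive pair contributes a factor $\pi_{q_iq_{i+1}}$ coming from the transition kernel while being forced to satisfy $Y_{i,i+1}=1$, so it plays exactly the same role as a completed edge with $Y_{ij}=1$. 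Hence every connected pair, whether a path edge or a completed edge, contributes one $\pi_{qr}$, and every non-connected pair contributes one $(1-\pi_{qr})$; grouping by the unordered type pair $\{q,r\}$ gives $\prod_{1\le q\le r\le Q}\pi_{qr}^{N^{q\leftrightarrow r}}(1-\pi_{qr})^{N^{q\nleftrightarrow r}}$.

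I expect the main delicate point to be this last merging step: one must verify that the forced path edges and the randomly completed edges can be counted uniformly, taking care that no pair is double counted and that the asymmetry created by the last node, which is the source of the $\ind_{Z_n=q}$ correction in the exponent of $\bar\pi_q$, is tracked correctly. Making explicit that the initial node is uniform, so that it contributes $\alpha_{q_1}$ rather than the stationary weight $\widetilde\alpha_{q_1}$, is also worth stating, since it is precisely what keeps the numerator equal to the clean $\prod_q\alpha_q^{N_n^q}$.
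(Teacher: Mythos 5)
Your proposal is correct and takes essentially the same approach as the paper. The paper simply states the first factor directly as the likelihood of the induced type chain, $\alpha_{Z_1}\prod_{m=1}^{n-1}\pi_{Z_m Z_{m+1}}\alpha_{Z_{m+1}}/\bar{\pi}_{Z_m}$, which is exactly what your position-level density yields after integrating each $X_i$ over its class interval; your merging of the forced edges $Y_{i,i+1}=1$ with the randomly completed edges and the final regrouping by type counts coincide with the paper's computation.
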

	
	\begin{proof}\tvc{We have that
		\begin{align*}
		\mathcal{L}(Z,Y ; \theta) &=\alpha_{Z_1} \prod_{m=1}^{n-1} \frac{\pi_{Z_m Z_{m+1}} \alpha_{Z_{m+1}}}{\sum_{q=1}^Q \pi_{Z_m q}\alpha_q} \times \prod_{\substack{1\leq i < j \leq n,\\
				|i-j|\not=1}} \pi_{Z_i Z_j}^{Y_{i,j}} (1-\pi_{Z_iZ_j})^{(1-Y_{i,j})},
		\end{align*}where the first product corresponds to the likelihood of the types sampled along the Markov chain, and the second product corresponds to the likelihood of edges between vertices that are not visited successively by the Markov chain. Because the graph is non-oriented, it is sufficient to consider $i<j$. Thus:
		\begin{align} \label{eq:likelihood}
		\mathcal{L}(Z,Y ; \theta) = \frac{ \prod_{i= 1}^{n} \alpha_{Z_i}}{ \prod_{i=1}^{n-1} \sum_{q = 1}^{Q} \pi_{Z_iq}\alpha_q} \times \prod_{1\leq i<j \leq n} b(Y_{ij}, \pi_{Z_iZ_j}),
		\end{align}
		where $b(Y_{ij}, \pi_{Z_iZ_j}) = \pi_{Z_iZ_j}^{Y_{ij}} (1- \pi_{Z_i Z_j})^{1- Y_{ij}}$ (recall that $Y_{i,i+1}=1$ by construction). Finally, rewriting the above likelihood using $N_n^q$, $N_n^{q \leftrightarrow r}$ and $N_n^{q \nleftrightarrow r}$, we obtain:
\begin{align}
		\mathcal{L}(Z, Y, \theta) &= \prod_{q=1}^{Q}\left(\frac{\pi_{qq}}{1- \pi_{qq}}\right)^{N_n^{q \leftrightarrow q}} (1-\pi_{qq})^{N_n^q(N_n^q -1)/2} \nonumber\\ & \quad \times \prod_{q < r} \left( \frac{\pi_{qr}}{1 - \pi_{qr}} \right)^{N_n^{q \leftrightarrow r}} (1- \pi_{qr})^{N_n^q N_n^r}
		\times \prod_{q = 1}^{Q} \frac{\alpha_q^{N_n^q}}{(\sum_{q'=1}^{Q} \pi_{qq'} \alpha_{q'})^{N_n^q - \ind_{Z_n = q}}},
		\end{align}which provides the announced result.
}
	\end{proof}	
	
	\begin{proposition}\label{prop:MLE}
		The MLE $\widehat{\theta} = (\widehat{\alpha}_q,\widehat{\pi}_{qr} ; 1\leq q\leq r \leq Q)$ is the solution of the following system of equations:
\begin{align}\label{eq:vraisemblance}
			& \frac{N_n^q}{\widehat{\alpha}_q} - \sum_{p=1}^{Q} \frac{(N_n^p - \ind_{Z_n=p}) \widehat{\pi}_{pq}}{\sum_{q'=1}^{Q} \widehat{\pi}_{pq'}\widehat{\alpha}_{q'}} = \frac{N_n^r}{\widehat{\alpha}_r} - \sum_{p=1}^{Q} \frac{(N_n^p - \ind_{Z_n=p}) \widehat{\pi}_{pr}}{\sum_{q'=1}^{Q} \widehat{\pi}_{pq'}\widehat{\alpha}_{q'}}   ;\\
			&   \frac{N_n^{q \leftrightarrow q}}{\widehat{\pi}_{qq}} - \frac{N_n^{q \nleftrightarrow q}}{1-\widehat{\pi}_{qq}} - \frac{(N_n^q - \ind_{Z_n=q})\widehat{\alpha}_q}{\sum_{q'=1}^{Q} \widehat{\pi}_{qq'} \widehat{\alpha}_{q'}}
	=0 ; \\
 &  \frac{N_n^{q \leftrightarrow r}}{\widehat{\pi}_{qr}} - \frac{N_n^{q \nleftrightarrow r}}{1-\widehat{\pi}_{qr}} - \frac{(N_n^q - \ind_{Z_n=q})\widehat{\alpha}_r}{\sum_{q'=1}^{Q} \widehat{\pi}_{qq'} \widehat{\alpha}_{q'}}
- \frac{(N_n^r - \ind_{Z_n=r})\widehat{\alpha}_q}{\sum_{q'=1}^{Q} \widehat{\pi}_{rq'} \widehat{\alpha}_{q'}}	=0 \quad \mbox{  if }q\not= r.
			\end{align}
	\end{proposition}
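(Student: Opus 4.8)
The plan is to characterize the MLE as an interior critical point of the log-likelihood, handling the constraint $\sum_{q=1}^Q \alpha_q = 1$ with a Lagrange multiplier. First I would take the logarithm of \eqref{eq:likelihood-in-fctofN}, which splits additively:
\begin{align*}
\log \mathcal{L}(Z,Y,\theta) &= \sum_{1\le q\le r\le Q}\big[ N_n^{q\leftrightarrow r}\log \pi_{qr} + N_n^{q\nleftrightarrow r}\log(1-\pi_{qr})\big] + \sum_{q=1}^Q N_n^q \log\alpha_q \\
&\quad - \sum_{q=1}^Q (N_n^q - \ind_{Z_n=q})\log\Big(\sum_{q'=1}^Q \pi_{qq'}\alpha_{q'}\Big).
\end{align*}
The three families of equations in the statement are then obtained as the first-order conditions in the $\pi$-variables and the $\alpha$-variables.

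Next I would differentiate with respect to the connectivity parameters. The key bookkeeping point is that, because the matrix $(\pi_{qr})$ is symmetric, each off-diagonal entry $\pi_{qr}$ with $q\ne r$ occurs in exactly two of the inner sums $\sum_{q'}\pi_{pq'}\alpha_{q'}$: for $p=q$ (as the coefficient of $\alpha_r$) and for $p=r$ (as the coefficient of $\alpha_q$), whereas the diagonal entry $\pi_{qq}$ occurs only in the sum indexed by $p=q$. Setting $\partial_{\pi_{qq}}\log\mathcal{L}=0$ then yields the diagonal equation, while $\partial_{\pi_{qr}}\log\mathcal{L}=0$ for $q\ne r$ yields the off-diagonal equation with its two correction terms. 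These are direct computations once the symmetry is accounted for.

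For the class proportions I would introduce a Lagrange multiplier $\lambda$ for the constraint $\sum_q \alpha_q = 1$ and set $\partial_{\alpha_q}\log\mathcal{L} = \lambda$. Since $\alpha_q$ appears both in the factor $\alpha_q^{N_n^q}$ and, for every $p$, as the coefficient of $\pi_{pq}$ in the inner sum indexed by $p$, this reads
\[
\frac{N_n^q}{\widehat{\alpha}_q} - \sum_{p=1}^Q \frac{(N_n^p - \ind_{Z_n=p})\widehat{\pi}_{pq}}{\sum_{q'=1}^Q \widehat{\pi}_{pq'}\widehat{\alpha}_{q'}} = \lambda,
\]
valid for each $q$. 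The multiplier is eliminated by equating the left-hand sides for two indices $q$ and $r$, which is exactly the first family of equations and avoids having to identify $\lambda$ explicitly.

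The main obstacle is not any single computation but the careful accounting of the symmetry $\pi_{qr}=\pi_{rq}$ and of the coupling between the $\alpha$'s and the normalizing denominators $\sum_{q'}\pi_{qq'}\alpha_{q'}$ inherited from the random-walk transitions, since these are precisely what distinguish the present likelihood from the classical SBM one and what generate the non-explicit correction terms. To be fully rigorous I would also record that the maximum is attained in the interior (all $\widehat{\alpha}_q>0$ and $\widehat{\pi}_{qr}\in(0,1)$ under Assumption~\ref{hypotheses}), so that the stationarity equations above genuinely characterize the MLE rather than a boundary point.
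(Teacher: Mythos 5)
Your proposal is correct and follows essentially the same route as the paper's proof: take the logarithm of the likelihood \eqref{eq:likelihood-in-fctofN}, form the Lagrangian for the constraint $\sum_{q=1}^Q \alpha_q = 1$, and obtain the system from the stationarity conditions, with the multiplier eliminated by equating the $\alpha$-derivatives across indices. Your explicit bookkeeping of the symmetry $\pi_{qr}=\pi_{rq}$ (explaining the two correction terms in the off-diagonal equations) and the remark about interiority merely spell out details the paper leaves implicit.
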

	
	\begin{proof}
		\thuy{		The log likelihood of the observations is:
		\begin{align*}
		\log \mathcal{L}=  &  \sum_{1\leq q\leq r\leq Q} N^{q\leftrightarrow r} \log(\pi_{qr})+N^{q\nleftrightarrow r} \log(1-\pi_{qr})\\
 & +\sum_{q=1}^Q \Big( N_n^q \log \alpha_q - (N_n^q-\ind_{Z_n=q})\log \big(\sum_{q'=1}^Q \pi_{qq'}\alpha_{q'}\big)\Big).
		\end{align*}
		When we optimize the function $\log \mathcal{L}$ with respect to the parameters and under the constraint that $\sum_{q=1}^Q \alpha_q=1$, we obtain after computation of the Lagrangian the following system. First, the estimator $\widehat{\theta} = (\widehat{\alpha}_q, \widehat{\pi}_{qr} ; 1\leq q\leq r\leq Q)$ satisfies the constraint
			\[\sum_{q=1}^Q \widehat{\alpha}_q=1.\]
		Second, the other equations of the system are:
			\begin{align*}
			\frac{\partial \log \mathcal{L}}{\partial \alpha_q} = \frac{\partial \log \mathcal{L}}{\partial \alpha_r} ; \quad
			\frac{\partial \log \mathcal{L}}{\partial \pi_{qr}} = 0.
			\end{align*}These equations give \eqref{eq:vraisemblance} for all $1\leq q\leq r\leq Q$. In the sequel, the example with $Q=2$ will be developed.				}
	\end{proof}

\thuy{The identifiability of the model where the sampling of nodes is i.i.d. is a result Allman et al. \cite[Theorem 7]{allmanmatiasrhodes}.
In our case, the consistence of $\widehat{\pi}_{qr}$ is obtained by Van der Vaart \cite[Th. 5.7]{vandervaart}.  Indeed, the sequence of log-likelihoods renormalized by $1/n^2$ converges to a limit when $n\rightarrow +\infty$ and this limit admits a local maximum around the true parameters $(\pi_{qr})$.
For the parameters $\alpha_q$, it is more tricky. Techniques developed by C\'elisse et al. \cite{celissdaudinpierre2012} and which are based on explicit expressions of the estimators can not be followed here. We can rewrite the likelihood of the $Y_{i,j}$'s as a mixture, given the probability of the $Z_i$'s, but the latter are not independent, which complicates the computation. This is left for further research.}


	\begin{remark}
		When the graph is completely observed and not only through the sampling from a Markov chain, the classical likelihood, as obtained in Daudin et al. \cite{daudinpicardrobin} is:
		\begin{align}\label{vraisemblance-classique}
		\mathcal{L}^{\class}(Z,Y ; \theta) =  & \prod_{i= 1}^{n} \alpha_{Z_i}  \times \prod_{1\leq i< j\leq n} b(Y_{ij}, \pi_{Z_iZ_j})\nonumber
		\\= & \prod_{q=1}^Q \alpha_q^{N_n^q}
		\times \prod_{1\leq q\leq r\leq Q}\pi_{qr}^{N_n^{q \leftrightarrow r}} (1-\pi_{qr})^{N_n^{q\nleftrightarrow r}}.
		\end{align}The difference between \eqref{vraisemblance-classique} and \eqref{eq:likelihood} is the first product which corresponds of the likelihood of the node types. In the classical case, these types are chosen independently whereas here they are discovered by the successive states of the Markov chain. In this classical case, the MLE has an explicit formula:
		\begin{equation}
		\widehat{\alpha}_q^{\class}=\frac{N_n^q}{n},\qquad \widehat{\pi}_{qr}^\class=\frac{N_n^{q\leftrightarrow r} }{N_n^q N_n^r},\qquad \widehat{\pi}_{qq}^\class=\frac{2N_n^{q\leftrightarrow q} }{N_n^q (N_n^q-1)}.\label{estimateurs:classiques}
		\end{equation}
	\end{remark}

	Here, for the likelihood \eqref{eq:likelihood-in-fctofN}, the MLE which solves \eqref{eq:vraisemblance} is not explicit any more. Let us discuss briefly the case of two classes ($Q=2$). The parameter is then $\theta=(\alpha,\pi_{11},\pi_{12},\pi_{22})$.
	Define $\widehat{\theta}= (\widehat{\alpha}, \widehat{\pi_{11}}, \widehat{\pi_{12}}, \widehat{\pi_{22}})$ the estimator of $\theta$.
The log likelihood is now:
\begin{align*}
		\log \mathcal{L}=  &   N^{1\leftrightarrow 1} \log(\pi_{11})+N^{1\nleftrightarrow 1} \log(1-\pi_{11})\\
 & +  N^{1\leftrightarrow 2} \log(\pi_{12})+N^{1\nleftrightarrow 2} \log(1-\pi_{12})\\
 & + N^{2\leftrightarrow 2} \log(\pi_{22})+N^{2\nleftrightarrow 2} \log(1-\pi_{22})\\
 & +  N_n^1 \log \alpha - (N_n^1-\ind_{Z_n=1})\log \big( \pi_{11}\alpha +\pi_{12}(1-\alpha)\big)\\
 & +  N_n^2 \log (1-\alpha) - (N_n^2-\ind_{Z_n=2})\log \big( \pi_{12}\alpha +\pi_{22}(1-\alpha)\big).
		\end{align*}Beware that the parameter $\pi_{12}$ appears in the two last lines. Then the estimators $\widehat{\theta}$ is the solution of
	\thuy{\begin{align}
		& \frac{N_n^1}{\widehat{\alpha}}- \frac{N_n^2}{1-\widehat{\alpha}}- \frac{(N_n^1 - \ind_{Z_n=1}) (\widehat{\pi_{11}}-\widehat{\pi_{12}})}{\widehat{\pi_{11}}\widehat{\alpha} + \widehat{\pi_{12}}(1-\widehat{\alpha})}- \frac{(N_n^2 - \ind_{Z_n=2}) (\widehat{\pi_{12}}- \widehat{\pi_{22}})}{\widehat{\pi_{12}}\widehat{\alpha} + \widehat{\pi_{22}}(1-\widehat{\alpha})} = 0 \label{the1steq};\\
		& \frac{N_n^{1\leftrightarrow1}}{\widehat{\pi_{11}}}
		- \frac{N_n^{1\nleftrightarrow 1}}{1-\widehat{\pi_{11}}}  - \frac{(N_n^1 - \ind_{Z_n=1}) \widehat{\alpha}}{\widehat{\pi_{11}}\widehat{\alpha}+\widehat{\pi_{12}}(1-\widehat{\alpha})}=0 \label{the2ndeq};\\
		& \frac{N_n^{1\leftrightarrow2}}{\widehat{\pi_{12}}}
		- \frac{N_n^{1\nleftrightarrow 2}}{1-\widehat{\pi_{12}}} - \frac{(N_n^1- \ind_{Z_n=1}) (1-\widehat{\alpha})}{\widehat{\pi_{11}}\widehat{\alpha}+\widehat{\pi_{12}}(1-\widehat{\alpha})} -\frac{(N_n^2 - \ind_{Z_n=2}) \widehat{\alpha}}{\widehat{\pi_{12}}\widehat{\alpha}+\widehat{\pi_{22}}(1-\widehat{\alpha})} = 0\label{the3rdeq};\\
		& \frac{N_n^{2\leftrightarrow2}}{\widehat{\pi_{22}}}
		- \frac{N_n^{2\nleftrightarrow 2}}{1-\widehat{\pi_{22}}} -\frac{(N_n^2 - \ind_{Z_n=2}) (1-\widehat{\alpha})}{\widehat{\pi_{12}}\widehat{\alpha}+\widehat{\pi_{22}}(1-\widehat{\alpha})}=0 \label{the4theq}.
		\end{align}
	}
	\thuy{Notice that the system of equations \eqref{the1steq}-\eqref{the4theq} is non-linear and can not be simplified further. Also, there does not exist the explicit solution for it. An algorithm for computing a particular solution for the case $Q=2$ is given in section 3.3.1 of the PhD thesis \cite{vo-these}. In our case, we use a numerical function: the \textbf{nlm} function of \textbf{R} to solve the system \eqref{the1steq}-\eqref{the4theq}  numerically to get the approximated values for the MLE $\widehat{\theta}$. }For the numerical simulations, we refer the reader to Section \ref{sec:numresults}.

	\subsection{Incomplete observations: SAEM Algorithm}\label{sec:likelihood-SAEM}


	Here, we assume that the types $Z=(Z_{i})_{i= 1,...,n}$ are unobserved.
	In this case, the likelihood of the observed data $Y=(Y_{ij} ;\ i,j\in \lbrac 1,n\rbrac)$ is obtained by
	summing the complete-data likelihood \eqref{eq:likelihood} over all the possible
	values of the unobserved variables $Z$:
	\begin{equation}\label{eq:vraisemblance-incomplete}
	\mathcal{L}(Y ; \theta) = \sum_{q_1,\cdots q_n=1}^Q \Big[ \frac{ \prod_{i= 1}^{n} \alpha_{q_i}}{ \prod_{i=1}^{n-1} \sum_{q = 1}^{Q} \pi_{q_iq}\alpha_q} \times \prod_{\substack{1\leq i<j\leq n \\ |i-j|\not= 1}} b(Y_{ij}, \pi_{q_iq_j})\Big],
	\end{equation}
	Unfortunately, this sum is not tractable and it is classical to use the Expectation-Maximization (EM) algorithm to compute the maximum likelihood. \tvc{Here we use an SAEM algorithm (see \cite{celeux:inria,kuhnlavielle}) with the variational approximation of the conditional distribution of $Z$ given $Y$ introduced in \cite{daudinpicardrobin}, and adapt their methods to our setting with the likelihood \eqref{eq:likelihood-in-fctofN}}.\\

	Let us sum up the EM algorithm (see e.g. \cite{celeux:inria,celeuxdiebolt,kuhnlavielle}).  Given the observed data: the Markov chain $X^{(n)}$, the connections $(Y_{ij},\ i,j\in X^{(n)})$ and the number of blocks $Q$ and the current estimator $\theta$, and given the value $\theta^{(k-1)}$ at the $(k-1)^{th}$ iteration of the EM, on the $k^{th}$ step, we compute the conditional expectation of the log-likelihood $\mathcal{L}(Z|X,Y,\theta^{(k)})$ given $X,Y$ for the current fit $\theta^{(k)}$. Here there is no explicit expression for the latter likelihood because the exact distribution of $Z$ given $X,Y$ is unknown and this we need to approximate it numerically by using an SAEM algorithm \cite{celeux:inria,kuhnlavielle}, proceeding as follows.
	
	\subsubsection{The SAEM algorithm}\label{sec:SAEM}
	Given the information of the $k-1$ iteration $\theta^{(k-1)} = (\alpha^{(k-1)}, \pi^{(k-1)})$, at the $k^{th}$ iteration of SAEM:
	\begin{itemize}
		\item[] \textbf{Step 1: Choosing the appropriate $Z^{(k)}$}
		\mbox{}\\
		- Simulate a candidate $Z^c$ following the proposal distribution $q_{\theta^{(k-1)}}(.| Z^{(k-1)})$. The choice of proposal distribution is discussed in Section \ref{sec:variational}, where we use a variational approach.
		\\
		- Calculate the acceptance probability
		\begin{equation}
		\omega(Z^{(k-1)}, Z^c) := \min\left\{ 1, \dfrac{\mathcal{L}(Z^c, Y, \theta^{(k-1)}) \cdot q_{\theta^{(k-1)}}(Z^{(k-1)}|Z^c)}{ \mathcal{L}(Z^{(k-1)}, Y, \theta^{(k-1)}) \cdot  q_{\theta^{(k-1)}}(Z^c|Z^{(k-1)}) } \right\};
		\end{equation}
		\\
		- Accept the candidate $Z^c$ with probability $\omega$: $\P(Z^{(k)} = Z^c) = \omega$ and $\P(Z^{(k)} = Z^{(k-1)}) = 1- \omega$.
		\\
		\item[] \textbf{Step 2: Stochastic approximation}
		Update the quantity
		\begin{align}
		& \mathcal{Q}^{(k)}(\theta) = \mathcal{Q}^{(k-1)}(\theta) + s_k \left( \log \mathcal{L}(Z^{(k)}, Y, \theta) - \mathcal{Q}^{(k-1)}(\theta) \right),
		\end{align}
		with the initialization $	\mathcal{Q}^{(0)}(\theta) := \mathbb{E}[\log \mathcal{L}(Z,Y, \theta^{(0)})]$ and $(s_k)_{k \in \mathbb{N}}$ is a positive decreasing step sizes sequence satisfying $\sum_{k =1}^{\infty} s_k = \infty$ and $\sum_{k =1}^{\infty} s_k^2 < \infty$.
		\\
		\item[] \textbf{Step 3: Maximization}
		Choose $\theta^{(k)}$ to be the value of $\theta$ that maximizes $\mathcal{Q}^{(k)}$	
		\begin{equation}
		\theta^{(k)} := \operatorname*{arg\,max}_{\theta}\mathcal{Q}^{(k)}(\theta).
		\end{equation}
	\end{itemize}
	
	Kuhn and Lavielle studied the convergence of the sequence $\theta^{(k)}$ in \cite{kuhnlavielle}. In the particular case of SBM, and for the incomplete likelihood based on \eqref{vraisemblance-classique}, the consistency of EM and variational methods has been studied by C\'elisse et al. \cite{celissdaudinpierre2012} and asymptotic normality has been established by Bickel et al. \cite{bickelchoichangzhang}. The likelihood that is considered here differs and these results can not be directly applied, but a study along these lines could be investigated.

	\subsubsection{Variational approach}\label{sec:variational}
	
	For the proposal distribution $q_{\theta^{(k-1)}}(. \ |\ Z^{(k-1)})$ of $Z^{(k)}$, we follow Daudin et al. \cite{daudinpicardrobin}, who use a variational approach. Let us recall the main idea of this approach. The general strategy has been described in Jordan
	et al. \cite{jordanghahramanijaakolasaul} or Jaakkola \cite{jaakola}.\\

	Recall the likelihood $\mathcal{L}(Y,\theta)$ of the incomplete data \eqref{eq:vraisemblance-incomplete}. The idea of the variational approach is to replace the likelihood by a lower bound:
	\begin{align}\label{def:JKullbackLeibler}
	\mathcal{J}(R_{Y,\theta}) = \log\mathcal{L}(Y,\theta) - \mathrm{KL}(R_{Y,\theta}(Z), \mathcal{L}(Z|Y,\theta)),
	\end{align}
	where $\mathrm{KL}(\mu, \nu) := \displaystyle\int d\mu \log\left( \dfrac{d\mu}{d\nu}\right)$ is the Kullback-Leibler divergence of distributions $\mu$ and $\nu$, and where $R_{Y,\theta}(Z)$ is an approximation of the conditional distribution $\mathcal{L}(Z|Y,\theta)$. When $R_{Y,\theta}$ is a good-approximation of $\mathcal{L}(Z|Y,\theta)$, $\mathcal{J}(R_{Y,\theta})$ is very closed to $\mathcal{L}(Y,\theta)$. \\
	Here, $Z$ takes discrete values in $\{1,..., Q\}$. Then,
	\begin{align}
	\mathcal{J}(R_{Y, \theta}) & = \log\mathcal{L}(Y, \theta) - \sum_{ (Z_1, ..., Z_n) \in \{ 1,..., Q\}^n} R_{Y, \theta}(Z) \log \frac{R_{Y, \theta}(Z)}{\mathcal{L}(Z|Y, \theta)}\nonumber\\
	& = \log \mathcal{L}(Y, \theta) - \sum_{ Z \in \{ 1,..., Q\}^n} R_{Y, \theta}(Z) \log R_{Y, \theta}(Z) \nonumber\\
	& \quad \quad \quad + \sum_{ Z \in \{ 1,..., Q\}^n} R_{Y, \theta}(Z) \log \mathcal{L}(Z|Y, \theta)\nonumber\\
	& = \log \mathcal{L}(Y, \theta) - \sum_{ Z \in \{ 1,..., Q\}^n} R_{Y, \theta}(Z) \log R_{Y, \theta}(Z) \nonumber\\
	& \quad + \sum_{ Z \in \{ 1,..., Q\}^n} R_{Y, \theta}(Z) \log \mathcal{L}(Z,Y,\theta)  - \sum_{ Z \in \{ 1,..., Q\}^n} R_{Y, \theta}(Z)\log \mathcal{L}(Y,\theta)\nonumber\\
	& = \sum_{ Z \in \{ 1,..., Q\}^n} R_{Y, \theta}(Z) \log \mathcal{L}(Z,Y,\theta) - \sum_{ Z \in \{ 1,..., Q\}^n} R_{Y, \theta}(Z) \log R_{Y, \theta}(Z)\nonumber\\
 & = \E_{R_{Y,\theta}}\big(\log \mathcal{L}(Z,Y,\theta)\big)-\E_{R_{Y,\theta}}\big(\log R_{Y, \theta}(Z)\big).\label{eq:J}
	\end{align}
	
	Following \cite{daudinpicardrobin}, we restrict to distributions $R_{Y,\theta}$ that belong to the family of multinomial probability distributions parameterized by $\tau=(\tau_1,\cdots \tau_Q)$, as approximated conditional distribution of $Z$ given $Y$ and $\theta$. \tvc{These multinomial distributions assume independence of the $Z_i$'s conditionally to the $Y$, which makes computations tractable}. If we look for the parameter $\tau$ that maximizes \eqref{def:JKullbackLeibler}, we will hence obtain the best approximation of $\mathcal{L}(Z|Y,\theta)$ among these multinomial distributions. We will chose the latter to be the proposal distribution for $Z$ in the Step 1 of the SAEM algorithm.	\\
	
	If $\ind_{Z_i}$ follows the multinomial distribution $\mathcal{M}(1; (\tau_{i1}, ..., \tau_{iq}))$, with
	$	\tau_{iq} = \P(Z_i = q|Y,\theta), $
	for $ i \in \{1, ..., n\}, q\in \{1,..., Q\} $, \tvc{and if the $Z_i$'s are independent with respect to $Y$,} then,
	\begin{equation}\label{eq:RYtheta}
	R_{Y, \theta}(Z) = \prod_{i =1}^{n} \tau_{i,Z_i}.
	\end{equation}

	We aim at calculating the parameter $\hat{\tau}$ that maximizes the lower bound of $\mathcal{L}(Y,\theta)$. Then the proposal distribution
	$q_{\theta^{(k-1)}}(.\ |\ Z^{(k-1)})$ for updating the types will be given by \eqref{eq:RYtheta} with the parameters $\widehat{\tau}$ given in the next proposition:

	\begin{proposition}
		Given $\alpha, \pi$, the optimal parameter \begin{align}
		\hat{\tau} &:= \operatorname*{arg\,max}_{\tau} \mathcal{J}(R_{Y, \theta}),
		\end{align}
		with constraint $\sum_{q=1}^Q \tau_{iq} =1, \forall i \in \{1,...,n\}$, satisfies the fixed point relation
		\begin{align}
		\tau_{iq} \propto \frac{\alpha_q}{\sum_{\ell=1}^Q \pi_{q\ell} \alpha_\ell} \prod_{i< j} \prod_{\ell=1}^Q b(Y_{ij}, \pi_{q\ell})^{\tau_{j\ell}}.
		\end{align}
	\end{proposition}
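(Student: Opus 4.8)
The plan is to recast the maximization of $\mathcal{J}(R_{Y,\theta})$ as a smooth constrained optimization over the variational parameters $(\tau_{iq})$ and to read off the fixed-point relation from the first-order (Lagrangian) conditions. First I would substitute the factorized form \eqref{eq:RYtheta} of $R_{Y,\theta}$ and the complete log-likelihood \eqref{eq:likelihood} into the expression \eqref{eq:J} of $\mathcal{J}$. Since under $R_{Y,\theta}$ the variables $Z_i$ are independent with $\P(Z_i=q)=\tau_{iq}$, the term $\E_{R_{Y,\theta}}\big(\log \mathcal{L}(Z,Y,\theta)\big)$ splits linearly into three pieces: the prior contribution $\sum_{i=1}^n\sum_{q=1}^Q \tau_{iq}\log\alpha_q$, the Markov normalizing term $-\sum_{i=1}^{n-1}\sum_{q=1}^Q \tau_{iq}\log\big(\sum_{q'=1}^Q\pi_{qq'}\alpha_{q'}\big)$, and the edge term $\sum_{1\le i<j\le n}\sum_{q,\ell=1}^Q \tau_{iq}\,\tau_{j\ell}\,\log b(Y_{ij},\pi_{q\ell})$, where the expectation of the edge factor factorizes because $Z_i$ and $Z_j$ are independent for $i\ne j$. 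The entropy term contributes $-\E_{R_{Y,\theta}}\big(\log R_{Y,\theta}(Z)\big)=-\sum_{i=1}^n\sum_{q=1}^Q\tau_{iq}\log\tau_{iq}$.

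Next I would introduce one Lagrange multiplier $\lambda_i$ per node for the constraint $\sum_{q=1}^Q\tau_{iq}=1$, form the associated Lagrangian, and differentiate it with respect to a single coordinate $\tau_{iq}$. The prior, normalizing and entropy terms are either linear in $\tau_{iq}$ or of the form $x\log x$, so they differentiate at once, producing $\log\alpha_q$, the factor $-\log\big(\sum_{q'}\pi_{qq'}\alpha_{q'}\big)$, and $-\log\tau_{iq}-1$ respectively, together with the multiplier term $-\lambda_i$.

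The delicate step is the differentiation of the bilinear edge term. Its derivative with respect to $\tau_{iq}$ receives contributions both from the ordered pairs $(i,j)$ with $j>i$ and from the pairs $(j,i)$ with $j<i$, where the fixed index $i$ plays the role of the second coordinate. The key observation is that, because $\pi_{q\ell}=\pi_{\ell q}$ and $Y_{ij}=Y_{ji}$, one has $b(Y_{ji},\pi_{\ell q})=b(Y_{ij},\pi_{q\ell})$, so after relabeling the two families of contributions recombine into the single symmetric sum $\sum_{j\neq i}\sum_{\ell=1}^Q\tau_{j\ell}\log b(Y_{ij},\pi_{q\ell})$. Setting the full derivative to zero and solving for $\log\tau_{iq}$ yields $\log\tau_{iq}=\log\alpha_q-\log\big(\sum_{q'}\pi_{qq'}\alpha_{q'}\big)+\sum_{j\neq i}\sum_{\ell}\tau_{j\ell}\log b(Y_{ij},\pi_{q\ell})-1-\lambda_i$. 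Exponentiating and absorbing the node-dependent constant $e^{-1-\lambda_i}$, which is fixed a posteriori by the normalization $\sum_q\tau_{iq}=1$, into the proportionality sign gives exactly the announced relation $\tau_{iq}\propto \alpha_q\big(\sum_{\ell}\pi_{q\ell}\alpha_\ell\big)^{-1}\prod_{j\neq i}\prod_\ell b(Y_{ij},\pi_{q\ell})^{\tau_{j\ell}}$.

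The main obstacle is precisely this bookkeeping of the quadratic edge term and its symmetrization, which is where sign and double-counting errors naturally creep in; the rest of the computation is routine. I would also flag two points. The normalizing term runs only up to $i=n-1$, so strictly the last visited node $Z_n$ carries no factor $\big(\sum_{q'}\pi_{qq'}\alpha_{q'}\big)^{-1}$, a boundary effect that the stated symmetric relation suppresses and that matches the $\ind_{Z_n=q}$ correction already appearing in the complete likelihood \eqref{eq:likelihood-in-fctofN}. Finally, since $\mathcal{J}$ is strictly concave in each $\tau_{i\cdot}$ through the entropy term but only bilinear in the edge contribution, the relation should be read as a stationarity condition characterizing critical points of $\mathcal{J}$ under the simplex constraints, which is exactly what is iterated as a fixed point in the SAEM proposal.
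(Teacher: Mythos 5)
Your proof is correct and follows essentially the same route as the paper: expanding $\mathcal{J}$ under the factorized multinomial $R_{Y,\theta}$ into the prior, normalizing, edge and entropy terms, introducing one Lagrange multiplier per node, differentiating, recombining the $j>i$ and $j<i$ halves of the bilinear edge term via the symmetry $b(Y_{ji},\pi_{\ell q})=b(Y_{ij},\pi_{q\ell})$, and exponentiating with the multiplier absorbed into the proportionality constant. Your two flagged caveats (the absent normalizing factor for the last node $i=n$, and reading the relation as a stationarity condition rather than a certified global maximum) are legitimate refinements that the paper's own proof silently glosses over.
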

	
	\begin{proof}Using \eqref{eq:likelihood}, \eqref{eq:J} and \eqref{eq:RYtheta}, we have:
		\begin{align}
		\mathcal{J} (R_{Y, \theta}) = \sum_{i =1}^n \sum_{q=1}^Q \tau_{iq} \log \alpha_{q}   - \sum_{i=1}^{n-1} \sum_{q=1}^Q \log \left(  \sum_{r=1}^Q \pi_{qr} \alpha_r \right) \tau_{iq} \nonumber \\ + \sum_{i< j} \sum_{q,r =1}^Q \tau_{iq} \tau_{jr} \log b(Y_{ij}, \pi_{qr}) - \sum_{i=1}^{n} \sum_{q=1}^{Q} \tau_{iq} \log\tau_{iq}.
		\end{align}
		To solve the optimization problem $\operatorname*{arg\,max}_\tau \mathcal{J}(R_{Y, \theta})$ with constraint $\sum_{q=1}^Q \tau_{iq}=1$, we use the method of Lagrange multipliers, that is finding the optimal parameters $\tau, \lambda$ that maximize the Lagrangian function $\Lag(\tau, \lambda):= \mathcal{J}(R_{Y, \theta}) + \sum_{i=1}^n \lambda_i (\sum_{q=1}^Q \tau_{iq} -1)$, where $\lambda_i$ is the Lagrange multiplier.
		Take the derivative of $\mathcal{L}ag$ w.r.t. $\lambda_i$ and $\tau$, we have
		\begin{align*}
		\begin{cases}
		\dfrac{\partial \Lag }{\partial \lambda_i} = \displaystyle\sum_{q=1}^Q \tau_{iq} -1\\
		\dfrac{\partial \Lag}{\partial \tau_{iq}} = \log \alpha_q - \log \tau_{iq} + \lambda_i -1 - \log \displaystyle\sum_{r=1}^{Q} \pi_{qr} \alpha_r + \sum_{j>i} \sum _{r=1}^Q \tau_{jr} \log b(Y_{ij}, \pi_{qr}) \\ \hspace*{6cm}+ \displaystyle\sum_{j< i} \sum _{r=1}^Q \tau_{jr} \log b(Y_{ji}, \pi_{r q})
		\end{cases}.
		\end{align*}
		The optimal solution must satisfy $\displaystyle\frac{\partial \Lag }{\partial \lambda_i} = \frac{\partial \Lag}{\partial \tau_{iq}} = 0$, which implies
		\begin{align*}
		\log \tau_{iq} = \log \alpha_q + \lambda_i -1 - \log \sum_{r=1}^{Q} \pi_{qr} \alpha_r + \sum_{j\neq i} \sum _{r=1}^Q \tau_{jr} \log b(Y_{ij}, \pi_{qr}) .
		\end{align*}
		In other word,
		\begin{align}
		\tau_{iq} = e^{\lambda_i-1}\frac{\alpha_q}{\sum_{r=1}^Q \pi_{qr} \alpha_r} \prod_{i \neq j} \prod_{r=1}^Q b(Y_{ij}, \pi_{qr})^{\tau_{jr}}.
		\end{align}
	\end{proof}

	In the case $Q=2$, it turns out the problem is more simple since for each $i \in \{1,...,n\}, \tau_{i1} + \tau_{i2} = 1$. For sake of simplification, we denote by  $\tau_{i}$ instead of $\tau_{i1}$. Hence, $\tau_{i2} = 1 -\tau_{i1} = 1 - \tau_i$.
	\begin{proposition}\label{prop:tau-Q=2}
		When $Q=2$, the variational parameter $\tau_i $ has formula:
		\begin{align}
		\tau_{i} = \frac{\phi_i(\tau)}{ 1+ \phi_i(\tau)}=: \Phi_i(\tau),
		\end{align}
		where \begin{align}\label{def:phii}
		\phi_i(\tau ) := \frac{\alpha}{1- \alpha}   \frac{\alpha \pi_{21} + (1-\alpha) \pi_{22}}{ \alpha \pi_{11} + (1-\alpha) \pi_{12} }
		\prod_{j \neq i} \left( \frac{b(Y_{ij}, \pi_{12})}{b(Y_{ij}, \pi_{22})} \right)^{1/2} \nonumber\\ \times
		\prod_{j\neq i} \left(  \frac{b(Y_{ij}, \pi_{11}) b(Y_{ij}, \pi_{22})}{b(Y_{ij}, \pi_{12})^2} \right) ^ {\tau_{j}/2}.
		\end{align}	
	\end{proposition}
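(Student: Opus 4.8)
The plan is to specialize the fixed-point relation of the preceding proposition to $Q=2$ and then eliminate the (unknown) normalising constant coming from the Lagrange multiplier by taking a ratio. Writing $e^{\lambda_i-1}$ for that constant, the preceding proposition gives, for $q\in\{1,2\}$,
\[
\tau_{iq} = e^{\lambda_i-1}\,\frac{\alpha_q}{\sum_{r=1}^2 \pi_{qr}\alpha_r}\,\prod_{j\neq i}\prod_{r=1}^2 b(Y_{ij},\pi_{qr})^{\tau_{jr}}.
\]
First I would record the two instances $q=1$ and $q=2$ with $\alpha_1=\alpha$, $\alpha_2=1-\alpha$, and form the ratio $\tau_{i1}/\tau_{i2}$. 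The factor $e^{\lambda_i-1}$ cancels, which is exactly what makes the two-class case tractable and removes the need to determine $\lambda_i$ explicitly.

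Second, I would exploit the two structural simplifications available when $Q=2$: the symmetry $\pi_{21}=\pi_{12}$ of the connectivity matrix, and the constraint $\tau_{i2}=1-\tau_i$ with $\tau_i:=\tau_{i1}$, so that $\tau_{i1}/\tau_{i2}=\tau_i/(1-\tau_i)$. The prefactor then becomes $\frac{\alpha}{1-\alpha}\cdot\frac{\alpha\pi_{21}+(1-\alpha)\pi_{22}}{\alpha\pi_{11}+(1-\alpha)\pi_{12}}$, matching the first two factors of \eqref{def:phii}. For each $j\neq i$ the ratio of the two $b$-products collapses, after substituting $\tau_{j2}=1-\tau_j$, to a single factor of the form $b(Y_{ij},\pi_{11})^{\tau_j}\,b(Y_{ij},\pi_{12})^{1-2\tau_j}\,b(Y_{ij},\pi_{22})^{\tau_j-1}$. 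Regrouping this factor into a part independent of $\tau_j$ and a part carrying the exponent $\tau_j$ is the step that produces the two products over $j\neq i$ appearing in \eqref{def:phii}, and hence identifies the right-hand side with $\phi_i(\tau)$.

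Finally, having reached $\tau_i/(1-\tau_i)=\phi_i(\tau)$, I would solve this scalar relation for $\tau_i$, obtaining $\tau_i=\phi_i(\tau)/(1+\phi_i(\tau))=\Phi_i(\tau)$, which is the claimed formula.

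The only genuine work is the bookkeeping in the second step: combining the $r=1$ and $r=2$ products and tracking the exponents $\tau_{jr}$ through the symmetry $\pi_{12}=\pi_{21}$ and the constraint $\tau_{j1}+\tau_{j2}=1$, so that the per-pair contributions group exactly into the two products defining $\phi_i$. I expect the main obstacle to be getting the exact powers in those products right, since a miscount of a constant or an exponent there is the easiest slip to make; by contrast, the cancellation of $e^{\lambda_i-1}$ and the inversion of the logistic relation $\tau_i/(1-\tau_i)=\phi_i$ are immediate.
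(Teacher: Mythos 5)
Your route is genuinely different from the paper's: the paper never takes ratios in the general-$Q$ fixed-point relation; instead it substitutes $\tau_{i2}=1-\tau_{i1}$ directly into $\mathcal{J}(R_{Y,\theta})$, writes the edge term as $\frac{1}{2}\sum_{i\neq j}[\cdots]$, and solves $\partial\mathcal{J}/\partial\tau_i=0$ with no Lagrange multiplier. Your ratio idea is sound as far as it goes: the constant $e^{\lambda_i-1}$ does cancel, and the prefactor $\frac{\alpha}{1-\alpha}\cdot\frac{\alpha\pi_{21}+(1-\alpha)\pi_{22}}{\alpha\pi_{11}+(1-\alpha)\pi_{12}}$ comes out exactly as in \eqref{def:phii}.

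The gap is in your second step, precisely where you predicted a slip could occur: the per-pair factors do \emph{not} group into the two products defining $\phi_i$. Carrying your computation through, with $\tau_{j1}=\tau_j$, $\tau_{j2}=1-\tau_j$ and $\pi_{21}=\pi_{12}$, each $j\neq i$ contributes
\begin{equation*}
\frac{b(Y_{ij},\pi_{11})^{\tau_j}\, b(Y_{ij},\pi_{12})^{1-\tau_j}}{b(Y_{ij},\pi_{12})^{\tau_j}\, b(Y_{ij},\pi_{22})^{1-\tau_j}}
=\frac{b(Y_{ij},\pi_{12})}{b(Y_{ij},\pi_{22})}\left(\frac{b(Y_{ij},\pi_{11})\, b(Y_{ij},\pi_{22})}{b(Y_{ij},\pi_{12})^{2}}\right)^{\tau_j},
\end{equation*}
i.e. exponents $1$ and $\tau_j$, not the $1/2$ and $\tau_j/2$ of \eqref{def:phii}. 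Since the prefactor is not square-rooted, the relation you actually derive and the relation $\tau_i/(1-\tau_i)=\phi_i(\tau)$ of the proposition are different equations with different solutions, so your argument does not prove the statement as written, and no regrouping will make the square roots appear. The origin of the discrepancy is worth understanding: in the paper's own proof, $\partial\mathcal{J}/\partial\tau_i$ is computed by collecting only the ordered pairs whose first index is $i$ inside $\frac{1}{2}\sum_{i\neq j}[\cdots]$, keeping the factor $\frac{1}{2}$; but $\tau_i$ also appears in the pairs $(j,i)$, whose contribution equals that of $(i,j)$ by symmetry and cancels the $\frac{1}{2}$. Done that way, the direct optimization gives exactly the no-square-root relation you obtain from the ratio, consistent with the general-$Q$ proposition you started from. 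In other words, your proposal is faithful to the preceding proposition and in fact exposes an inconsistency between it and \eqref{def:phii}; but as a proof of Proposition \ref{prop:tau-Q=2} as stated, the claimed identification with $\phi_i$ is false, which is a genuine gap.
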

	
	\begin{proof}
		We solve directly the optimization problem $\max_{\tau} \mathcal{J}(R_{Y, \theta})$ without using the Lagrangian multiplier $\lambda$. The quantity $\mathcal{J}(R_{Y, \theta})$ is written explicitly as:
		\begin{align*}
		\mathcal{J}(R_{Y, \theta}) = \sum_{i=1}^n (\tau_i \log \alpha + (1- \tau_i) \log(1-\alpha) ) - \sum_{i=1}^{n} (\tau_i \log \tau_i + (1-\tau_i) \log(1-\tau_i) )\\
		+ \frac{1}{2} \sum_{i \neq j} \left[ \tau_i \tau_j \log b(Y_{ij}, \pi_{11}) + \tau_i(1-\tau_j) \log b(Y_{ij}, \pi_{12}) + (1-\tau_i) \tau_j \log b( Y_{ij}, \pi_{21} ) \right. \\
		+ \left.(1-\tau_i)(1- \tau_j) \log b(Y_{ij}, \pi_{22})  \right]
		- \sum_{i=1}^{n-1} [ \tau_i \log(\alpha \pi_{11} + (1- \alpha) \pi_{12}) \\
		+ (1-\tau_i) \log(  \alpha \pi_{21} + (1- \alpha) \pi_{22}].
		\end{align*}
		Take the derivative of $\mathcal{J}(R_{Y, \theta})$ w.r.t. $\tau_i$,
		\begin{align*}
		\frac{\partial \mathcal{J}}{\partial \tau_i} & =& \log\frac{\alpha}{1-\alpha}  + \log\frac{1- \tau_i}{\tau_i} + \frac{1}{2} \sum_{j\neq i} \left\{ \tau_j \log\frac{b(Y_{ij}, \pi_{11} )}{b(Y_{ij}, \pi_{21})} + (1- \tau_j) \log \frac{b( Y_{ij}, \pi_{12})}{b(Y_{ij}, \pi_{22})}  \right\}
		\\& &- \log \frac{\alpha \pi_{11} + (1- \alpha) \pi_{12} }{\alpha \pi_{21} + (1-\alpha) \pi_{22}}\\
		& =&\log\frac{\alpha}{1-\alpha}  - \log\frac{\tau_i}{1- \tau_i}  - \log \frac{\alpha \pi_{11} + (1- \alpha) \pi_{12} }{\alpha \pi_{21} + (1-\alpha) \pi_{22}} \hspace*{4cm}\\
		& &+ \frac{1}{2} \sum_{j\neq i} \tau_j \log \frac{b(Y_{ij}, \pi_{11}) b(Y_{ij}, \pi_{22}) }{ b(Y_{ij}, \pi_{12})^2 }
		+ \frac{1}{2} \sum_{j\neq i} \log \frac{b(Y_{ij}, \pi_{12}) }{ b(Y_{ij}, \pi_{22}) }.
		\end{align*}
		Then the variational parameter $\tau_i$ is the solution of equation $\frac{\partial \mathcal{J}}{\partial \tau_i} =0$, which gives
		\begin{align*}
		\frac{\tau_i}{1- \tau_i} = \frac{\alpha}{1-\alpha}  \times \frac{\alpha \pi_{11} + (1- \alpha) \pi_{12} }{\alpha \pi_{21} + (1-\alpha) \pi_{22}} \times 	\prod_{j \neq i} \left( \frac{b(Y_{ij}, \pi_{12})}{b(Y_{ij}, \pi_{22})} \right)^{1/2}
		\\\times \prod_{j\neq i} \left(  \frac{b(Y_{ij}, \pi_{11}) b(Y_{ij}, \pi_{22})}{b(Y_{ij}, \pi_{12})^2} \right) ^ {\tau_j/2}= \phi_i(\tau).
		\end{align*}
		It implies that $\tau_i = \frac{\phi_i(\tau)}{1 + \phi_i(\tau)} = \Phi_i(\tau)$.
	\end{proof}

	\subsubsection{Proposal distribution for the Step 1 of SAEM}
	
	For the sake of simplicity, we treat here the case $Q=2$, but generalization is straightforward. Using the previous results, we can now detail the Step 1 of the SAEM algorithm. Given the parameters $\theta^{(k-1)}$, the types $Z^{(k-1)}$ and the data $(Y_{ij} ; i,j \in \lbrac 1, n\rbrac)$, we proceed as follows.\\
	
	\noindent \textbf{Step 1:}	We compute the parameters $\tau_i^{(k)}$ as in Proposition \ref{prop:tau-Q=2}. The parameters in \eqref{def:phii} are given by $\theta^{(k-1)}$ and the terms
	$b(Y_{ij},\pi_{11}^{(k-1)})$, $b(Y_{ij},\pi_{12}^{(k-1)})$ and $b(Y_{ij},\pi_{22}^{(k-1)})$ are computed with the types $Z^{(k-1)}$.\\

	\noindent \textbf{Step 2:}	We simulate a candidate $Z^c \in \{1,2\}^n$ for $Z$ such that $Z_i^c - 1$ follows the law $ \mathcal{B}er(\tau_i)$.
	Recall that the acceptance probability is
	
	\begin{equation}
	\mu(Z^{(k-1)}, Z^c) := \min\left\{ 1, \dfrac{\mathcal{L}_{\text{com}}(Z^c, Y, \theta^{(k-1)}) q_{\theta^{(k-1)}}(Z^{(k-1)}|Z^c)}{ \mathcal{L}_{\text{com}}(Z^{(k-1)}, Y,\theta^{(k-1)}) q_{\theta^{(k-1)}}(Z^c|Z^{(k-1)})} \right\},
	\end{equation}
	where the complete likelihood with respect to $\alpha, \pi, Z, Y$ is
	\begin{align*}
	\mathcal{L}_{\text{com}}(Z, Y, \theta) &= \prod_{q=1}^{Q}\left(\frac{\pi_{qq}}{1- \pi_{qq}}\right)^{N_n^{q \leftrightarrow q}} (1-\pi_{qq})^{N_n^q(N_n^q -1)/2} \\ & \quad \times \prod_{q \neq r} \left( \frac{\pi_{qr}}{1 - \pi_{qr}} \right)^{N_n^{q \leftrightarrow r}} (1- \pi_{qr})^{N_n^q N_n^r}
	\prod_{q = 1}^{Q} \frac{\alpha_q^{N_n^q}}{(\sum_{q'=1}^{Q} \pi_{qq'} \alpha_{q'})^{N_n^q - \ind_{Z_n = q}}}.
	\end{align*}
	and
	\begin{align*}
	q_{\theta^{(k-1)}} (Z^c|Z^{(k-1)})  &=\prod_{i=1} \tau_i^{2-Z_i^c} (1-\tau_i)^{Z_i^c -1}; \\
	\quad q_{\theta^{(k-1)}} (Z^{(k-1)}| Z^c) &=  \prod_{i=1} \tau_i^{2-Z_i^{(k-1)}} (1-\tau_i)^{Z_i^{(k-1)} -1}.
	\end{align*}\\

	\section{Estimation via biased graphon and `classical likelihood'}\label{sec:graphon}

	In Section \ref{sec:likelihood}, the MLE are computed but they do not have explicit formula in the case of RDS exploration. We thus investigate other estimators. The most natural one is the graphon estimator corresponding to \eqref{estimateurs:classiques}. It turns out that we can study the asymptotic bias of this estimator thanks to the result of Athreya and R\"ollin \cite{athreyaroellin1}.
	\tvc{First, in Section \ref{sec:completeobs2} we provide a two-step estimator in the case where everything is observed: $(X_i, Z_i, Y_{ij} ; i,j\in \{1,\cdots n\})$ are available. This new estimator is explicit: we compute the estimator \eqref{estimateurs:classiques} of Daudin et al. \cite{daudinpicardrobin} and then correct the weights of classes according to the formula of Athreya and R\"ollin (see \eqref{eq:widehatalpha}).}\\
	\tvc{Then in Section \ref{sec:incomplete_graphon}, when the $Z_i$'s are unobserved, we propose an SAEM estimator based on the one introduced above. Here, we need some to have the knowledge on the positions $X_i$'s of the Markov chain $X^{(n)}$ when the $Z_i$'s are missing. Notice however that (i) the knowledge of the $X_i$'s gives partial knowledge on the types $Z_i$'s since the latter are determined from the $X_i$'s once the intervals $I_q$ are given and (ii) the likelihood function \eqref{eq:likelihood-in-fctofN} depends on the $X_i$'s only through the $Z_i$'s}.

	\subsection{Complete observations}\label{sec:completeobs2}

	Assume in this section that we observe $X^{(n)}=(X_1,\dots X_n)$, the types $(Z_i)_{i\in \{1,\dots n\}}$ and the adjacency matrix $(Y_{ij})_{i, j\in \{1,\dots n\}}$ of the subgraph $G_n=G(X^{(n)},\kappa,H_n)$. \\
	
	From the result of Athreya and R\"ollin \cite{athreyaroellin1}, $G_n$ converges to the SBM graphon $\kappa_{\widetilde{\theta}}$ of parameter $\widetilde{\theta}= (\widetilde{\alpha}_q, \pi_{qr} ; q,r\in \{1,\cdots Q\} )$. This leads to a natural two-stages estimation of the parameter $\theta$ that we now define.\\

\begin{definition}The estimator of $\theta$, is defined in two steps.\\

\noindent \textbf{First step:} we estimate $\widetilde{\theta}= (\widetilde{\alpha}, \pi)$. A natural estimator is the classical MLE when assuming that there is no biases. Let us therefore define:
\begin{align}
		\widehat{\lambda}^n_q := \frac{N_n^q}{n}; \quad \widehat{\pi}_{qr}^n := \frac{N_n^{q\leftrightarrow r}}{N_n^q N_n^r} \quad \text{for} \quad q \neq r \quad \text{and} \quad \widehat{\pi}_{qq}^n := \frac{2N^{q\leftrightarrow q}_n}{N_n^q(N_n^q - 1)}.
		\end{align}

\noindent \textbf{Second step:} we correct the estimator $\widetilde{\theta}$ to obtain $\theta$. Especially, we specify an estimator of $\alpha_q$ obtained by correcting the estimator $\widehat{\lambda}_q$ of $\widetilde{\alpha}_q$. For this, we set for $q\in \{1,\dots Q\}$, $\widehat{\Lambda}^n_q=\sum_{k=1}^q \widehat{\lambda}^n_k$ and define
\begin{equation}\label{eq:widehatalpha}
		\widehat{\alpha}_q^n=\Gamma_n^{-1}\big(\widehat{\Lambda}^n_q\big)-\Gamma_n^{-1}\big(\widehat{\Lambda}^n_{q-1}\big),
\end{equation}where $\Gamma_n$ is the cumulative empirical distribution function of the $X_i$'s, see \eqref{eq:Gamman}.\\

Let us define by $\widehat{\theta}=(\widehat{\alpha}^n_q , \widehat{\pi}^n_{qr} ; q,r\in \{1,\dots Q\})$ the estimator of $\theta$.
\end{definition}

To understand \eqref{eq:widehatalpha}, recall that from \eqref{def:Iq} and \eqref{def:Gamma}:
\begin{equation}
\alpha_q= A_q-A_{q-1}=\Gamma^{-1}\big(\widetilde{A}_q\big)-\Gamma^{-1}\big(\widetilde{A}_{q-1}\big).
\end{equation}where $\widetilde{A}_q$ are defined under Equation \eqref{def:Gamma}.



\begin{proposition}\label{eq:estimatorsAR}Under Assumptions \ref{hypotheses},\\
		(i) For all $q,r\in \{1,\cdots Q\}$ $\widehat{\lambda}^n_q$ is a consistent estimator of $\widetilde{\alpha}_q$ and $\widehat{\pi}^n_{qr}$ is a consistent estimator of $\pi_{qr}$:
		\begin{equation}\label{eq:cv_rho_gamma}\lim_{n\rightarrow +\infty} \widehat{\pi}^n_{qr}=\pi_{qr},\qquad \mbox{ and }\qquad \lim_{n\rightarrow +\infty}\widehat{\lambda}_q^n=\Gamma(A_q)-\Gamma(A_{q-1}) = \widetilde{\alpha}_q,\end{equation}where we recall the notations of \eqref{def:Iq} and \eqref{def:alphatilde}.\\
		(ii) It follows that $\widehat{\alpha}_q^n$ is a consistent estimator of $\alpha_q$ for all $q\in \{1,\cdots Q\}$: almost surely,
\[\lim_{n\rightarrow +\infty}\widehat{\alpha}_q^n=\alpha_q.\]
		In the special case of $Q=2$, an estimator of $\alpha_1$ is $ \widehat{\alpha}^n_1= \Gamma_n^{-1}(\widehat{\lambda}^n_1)$.
	\end{proposition}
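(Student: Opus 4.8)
The plan is to establish (i) via the ergodic theorem for the empirical weights together with a conditional law of large numbers for the connection frequencies, and then to deduce (ii) by combining the convergence of the empirical weights with the almost sure uniform convergence of $\Gamma_n^{-1}$ proved in the lemma of the previous section. For the weights, I would observe that $N_n^q=\sum_{i=1}^n\ind_{X_i\in I_q}$, so that $\widehat{\lambda}^n_q=\Gamma_n(A_q)-\Gamma_n(A_{q-1})$ up to the (atomless, hence negligible) contribution of the endpoints. Since $\Gamma_n\to\Gamma$ almost surely by the ergodic theorem applied to the chain $X$ under Assumption \ref{hypotheses} (whose invariant law is $m$, see \eqref{eq:pi}), this gives $\widehat{\lambda}^n_q\to\Gamma(A_q)-\Gamma(A_{q-1})=m(I_q)=\widetilde{\alpha}_q$, which is the announced limit in \eqref{eq:cv_rho_gamma}.

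For the connection probabilities, the key observation is that, conditionally on the whole trajectory $X^{(n)}$ (and hence on all the types $Z_i$), Definition \ref{def:Gn} makes the variables $(Y_{ij})_{|i-j|\neq 1}$ independent with $Y_{ij}\sim\mathcal{B}er(\pi_{Z_iZ_j})$, while the $n-1$ consecutive edges are deterministically equal to $1$. Fixing $q\neq r$ and letting $P_{qr}$ denote the set of pairs with types $\{q,r\}$, so that $|P_{qr}|=N_n^qN_n^r$, I would split $N_n^{q\leftrightarrow r}$ into its at most $n-1$ forced consecutive edges and the sum of the conditionally independent Bernoulli variables over the remaining pairs. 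After dividing by $N_n^qN_n^r$ and using the previous step (which gives $N_n^qN_n^r\sim\widetilde{\alpha}_q\widetilde{\alpha}_r\,n^2\to\infty$ almost surely), the forced edges contribute $O(n)/O(n^2)\to 0$, whereas a conditional Hoeffding bound combined with Borel--Cantelli shows that the empirical frequency over the remaining pairs converges almost surely to $\pi_{qr}$. The diagonal case $q=r$ is handled identically with $|P_{qq}|=N_n^q(N_n^q-1)/2$, completing the proof of \eqref{eq:cv_rho_gamma}.

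To obtain (ii), I would start from $\widehat{\Lambda}^n_q=\sum_{k=1}^q\widehat{\lambda}^n_k\to\sum_{k=1}^q\widetilde{\alpha}_k=\widetilde{A}_q$ almost surely, and then bound
\[
\big|\Gamma_n^{-1}(\widehat{\Lambda}^n_q)-\Gamma^{-1}(\widetilde{A}_q)\big|\le\|\Gamma_n^{-1}-\Gamma^{-1}\|_\infty+\big|\Gamma^{-1}(\widehat{\Lambda}^n_q)-\Gamma^{-1}(\widetilde{A}_q)\big|.
\]
The first term vanishes almost surely by the uniform convergence $\Gamma_n^{-1}\to\Gamma^{-1}$, and the second by the continuity of the piecewise affine map $\Gamma^{-1}$ together with $\widehat{\Lambda}^n_q\to\widetilde{A}_q$. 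Since $\Gamma$ maps $A_q$ onto $\widetilde{A}_q$, this yields $\Gamma_n^{-1}(\widehat{\Lambda}^n_q)\to\Gamma^{-1}(\widetilde{A}_q)=A_q$, whence $\widehat{\alpha}^n_q\to A_q-A_{q-1}=\alpha_q$. The case $Q=2$ follows by taking $q=1$ and using $\widehat{\Lambda}^n_0=0$ with $\Gamma_n^{-1}(0)=0$.

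The main obstacle is the second step: because the types are generated by a dependent Markov chain, one cannot apply an unconditional law of large numbers directly, and the argument must be run conditionally on $X^{(n)}$, simultaneously controlling the random (of order $n^2$) number of conditionally independent edges and isolating the $O(n)$ forced consecutive edges as negligible. By contrast, the first and third steps are routine once the uniform convergence of $\Gamma_n^{-1}$ is available.
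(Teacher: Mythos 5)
Your proposal is correct and follows essentially the same route as the paper: the ergodic theorem for the class weights, a decomposition of $N_n^{q\leftrightarrow r}$ into the $O(n)$ forced consecutive edges and the conditionally independent non-consecutive edges handled by concentration (your conditional Hoeffding bound plays the role of the paper's McDiarmid inequality) plus Borel--Cantelli, and finally the uniform convergence of $\Gamma_n^{-1}$ for part (ii). The only notable difference is that you spell out the deduction of (ii), which the paper leaves implicit, and you keep the convergence mode consistently almost sure, which is slightly cleaner than the paper's mix of almost-sure and in-probability statements.
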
	

The proof of Proposition \ref{eq:estimatorsAR} is done in the next section (Section \ref{sec:proof_estimatorAR}).\\

	\tvc{	We can go a little further: we indeed have two empirical approximations of the limiting graphon $\kappa_{\widetilde{\theta}}$: the graph $G_n$ (which converge to $\kappa_{\widetilde{\theta}}$ by the result of Athreya and R\"ollin) and the graphon $\widehat{\chi}_n$ associated with $\widehat{\theta}$ and defined below (whose convergence remains to be proved). The following result concludes that these two approximations are asymptotically equal, providing as a result the convergence of $\widehat{\chi}_n$. It is proved in Section \ref{sec:limit_chin}.}

	\begin{proposition}\label{prop:GnChin}The graphon associated to the estimator $(\widehat{\lambda}^n_q, \widehat{\pi}_{qr} ; q,r\in \{1,\dots Q\})$ is defined as:
		\begin{align}
		\widehat{\chi}_n(x,y) := \sum_{q=1}^Q\sum_{r=1}^Q \widehat{\pi}_{qr}^n \ind_{J^n_q}(x)\ind_{J^n_r}(y),
		\end{align}
		with $J^n_q = [ \widehat{\Lambda}^n_{q-1},\widehat{\Lambda}^n_q)$ and $\widehat{\Lambda}^n_q$ are defined above \eqref{eq:widehatalpha}.
We have under Assumption \ref{hypotheses} that:\\
		(i) when $n\rightarrow +\infty$,
		\begin{align}
		\lim_{n\rightarrow +\infty} d_{sub}(G_n, \widehat{\chi}_n) =0.
		\end{align}
		(ii) The limit of the empirical graphon $\widehat{\chi}_n$ is thus the biased graphon $\kappa_{\widehat{\theta}}$.
		\begin{align}
		\lim_{n \rightarrow +\infty} d_{\sub}(\widehat{\chi}_n, \kappa_{\widehat{\theta}}) = 0.
		\end{align}
	\end{proposition}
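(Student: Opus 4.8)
The plan is to reduce everything to the almost sure convergence of the finitely many block parameters that define $\widehat\chi_n$, and then to push this through the distance $d_\sub$ together with a triangle inequality. Recall from \eqref{eq:dsub} that $d_\sub(G,G')=\sum_{i\ge 1}2^{-i}\,|t(F_i,G)-t(F_i,G')|$, where each difference is bounded by $1$ because homomorphism densities lie in $[0,1]$. Since $\sum_i 2^{-i}<\infty$, dominated convergence for series reduces both claims to controlling $t(F,\cdot)$ for one fixed finite graph $F$ at a time.

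The key elementary fact is that for a step-graphon with block widths $(w_q)_q$ and connection matrix $(p_{qr})_{q,r}$, integrating the graphon homomorphism density over each of the $k$ coordinates gives, for a graph $F$ on $\{1,\dots,k\}$,
\begin{equation*}
t(F,\cdot)=\sum_{c\,:\,\{1,\dots,k\}\to\{1,\dots,Q\}}\ \prod_{v=1}^{k} w_{c(v)}\ \prod_{\{\ell,\ell'\}\in E(F)} p_{c(\ell),c(\ell')},
\end{equation*}
a polynomial in the $w_q$ and $p_{qr}$. In particular $t(F,\cdot)$ depends on a step-graphon only through its widths and its value matrix, not through the location of the breakpoints; hence $t(F,\widehat\chi_n)$ is this polynomial evaluated at $(w_q,p_{qr})=(\widehat\lambda^n_q,\widehat\pi^n_{qr})$, irrespective of the random endpoints $\widehat\Lambda^n_q$. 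By Proposition \ref{eq:estimatorsAR}, $\widehat\lambda^n_q\to\widetilde\alpha_q$ and $\widehat\pi^n_{qr}\to\pi_{qr}$ almost surely, so continuity of the polynomial yields $t(F,\widehat\chi_n)\to t(F,\kappa_{\widetilde\theta})$ a.s. for every $F$. Inserting this into the dominated series gives
\[
\lim_{n\to+\infty} d_\sub\big(\widehat\chi_n,\kappa_{\widetilde\theta}\big)=0\qquad\text{a.s.},
\]
i.e. $\widehat\chi_n$ converges to the biased SBM graphon with weights $\widetilde\alpha_q$ and connectivities $\pi_{qr}$, which is the biased graphon named in (ii).

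For (i) I would then invoke the triangle inequality for $d_\sub$,
\[
d_\sub\big(G_n,\widehat\chi_n\big)\le d_\sub\big(G_n,\kappa_{\widetilde\theta}\big)+d_\sub\big(\kappa_{\widetilde\theta},\widehat\chi_n\big),
\]
whose first term vanishes by Proposition \ref{th:athreya} (using $\kappa_{\Gamma^{-1}}=\kappa_{\widetilde\theta}$ for SBM graphons) and whose second term vanishes by the previous paragraph; thus $d_\sub(G_n,\widehat\chi_n)\to0$.

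I do not expect a genuine obstacle here, because the two hard inputs are already packaged: the sampling-bias identification is Proposition \ref{th:athreya} and the consistency of the plug-in estimators is Proposition \ref{eq:estimatorsAR}. The only points demanding care are soft: checking that $t(F,\widehat\chi_n)$ depends on $\widehat\chi_n$ only through $(\widehat\lambda^n_q,\widehat\pi^n_{qr})$ so that the randomly placed blocks $J^n_q$ are harmless, and justifying the exchange of limit and infinite sum in $d_\sub$ via the uniform bound $|t(F_i,\cdot)|\le 1$. Note that this argument reverses the narrative order preceding the statement: it establishes the convergence of $\widehat\chi_n$ first and then deduces (i), rather than proving (i) directly; a direct proof of (i) would instead require a concentration estimate comparing the random subgraph counts $t(F,G_n)$ with the fitted values $t(F,\widehat\chi_n)$, which the triangle-inequality route circumvents.
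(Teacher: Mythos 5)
Your proof is correct, and it takes a route that is genuinely different from --- in fact dual to --- the paper's. The paper proves (i) directly: for each fixed motif $F$ it expands $t(F,G_n)-t(F,\widehat\chi_n)$, replaces the edge indicators $\ind_{i_\ell\sim_{G_n} i_{\ell'}}$ by the fitted block values $\widehat\pi^n_{Z_{i_\ell}Z_{i_{\ell'}}}$, and then compares the empirical type frequencies with the integrals of the step graphon; point (ii) is afterwards deduced from the triangle inequality $d_{\sub}(\widehat\chi_n,\kappa_{\widetilde\theta})\leq d_{\sub}(\widehat\chi_n,G_n)+d_{\sub}(G_n,\kappa_{\widetilde\theta})$ together with Proposition \ref{th:athreya}. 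You do the reverse: you first establish the graphon-to-graphon convergence $d_{\sub}(\widehat\chi_n,\kappa_{\widetilde\theta})\to 0$, via the observation that $t(F,\widehat\chi_n)$ is a fixed polynomial in $(\widehat\lambda^n_q,\widehat\pi^n_{qr})$ (so the random placement of the breakpoints $\widehat\Lambda^n_q$ is irrelevant), invoking the consistency of Proposition \ref{eq:estimatorsAR}, continuity, and domination of the series $\sum_i 2^{-i}$; then (i) follows by the triangle inequality on the other side, again consuming Proposition \ref{th:athreya} once. What your order buys is that your direct half is purely deterministic given consistency: all probabilistic concentration is delegated to the quoted Athreya--R\"ollin result. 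The paper's direct half, by contrast, must itself control the fluctuations of the subgraph counts of $G_n$ around the fitted values; its claim that the first term of its three-way split is bounded by $c(k)/n$ for a general $F$ really requires a concentration estimate (boundedness by $1$ alone does not give an $O(1/n)$ rate), so your route circumvents the least rigorous step of the paper's own argument --- precisely the ``concentration estimate'' you flag as avoided. One caveat that both proofs share and that you resolved correctly: the $\kappa_{\widehat\theta}$ in the statement of (ii) must be read as the biased graphon $\kappa_{\widetilde\theta}$ (weights $\widetilde\alpha_q$, connectivities $\pi_{qr}$), since Proposition \ref{th:athreya} identifies the limit of $G_n$ as $\kappa_{\Gamma^{-1}}=\kappa_{\widetilde\theta}$; with the literal reading of $\widehat\theta$ as the de-biased estimator, neither your proof nor the paper's would (or should) go through.
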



\subsubsection{Proof of Proposition \ref{eq:estimatorsAR}}	\label{sec:proof_estimatorAR}

		Let us consider point (i) of Proposition \ref{eq:estimatorsAR}. The limit for $\widehat{\lambda}_q^n$ follows from the ergodic theorem. Indeed, we can write that
		\begin{align*}
		\widehat{\lambda}_q^n = \frac{N_n^q}{n} = \frac{1}{n} \sum_{i = 1}^n \ind_{X_i^{(n)} \in I_q}.
		\end{align*}
		The ergodic theorem for the Markov chain $(X^{n})_n$ says that
		\begin{align*}
		\lim\limits_{n \rightarrow +\infty}	\frac{1}{n} \sum_{i = 1}^n \ind_{X_i^{(n)} \in I_q} = \E_m[ \ind_{ X_1 \in I_q }] = \Gamma(A_q) - \Gamma(A_{q-1}) = \widetilde{\alpha}_q.
		\end{align*}\\
		
		It remains to prove that $\widehat{\pi}_{qr}^n$ is a consistent estimator of $\pi_{qr}$. Rewrite $\widehat{\pi}_{qr}^n$  as
		\[ \widehat{\pi}_{qr}^n = \frac{N^{q \leftrightarrow r}_n/n^2}{\frac{N^{q}_n}{n} \frac{N^{r}_n}{n}} = \frac{1}{\widehat{\lambda}_q^n \widehat{\lambda}_r^n} \frac{1}{n^2} N_n^{q \leftrightarrow r}.\]
		Recall that the subgraph $G_n$ is constructed from the Markov chain $X^{(n)}$ and that each pair of non-consecutive vertices $X_i$ and $X_j$ are connected with probability $\kappa_{\theta}(Z_i, Z_j)$ depending on theirs types and independently of the others edges. Let us focus on the number of edges $N_n^{q\leftrightarrow r}$: two cases have to be distinguished.\\

		\noindent \textbf{Case 1, $q\not=r$:} The number of edges of types $(q,r)$ is
		\[ N^{q \leftrightarrow r}_n = \sum_{i=1}^{n-1} \ind_{X_i \in I_q, X_{i+1} \in I_r} + \sum_{\substack{1\leq i,j\leq n\\ \|i-j\| \neq 1 }}\ind_{i \sim_{G_n} j} \ind_{X_i \in I_q, X_j \in I_r}.\]
		Then,
		\begin{equation}\label{etape1} \widehat{\pi}^n_{qr} = \frac{1}{\widehat{\lambda}_q^n \widehat{\lambda}_r^n n} \left(\frac{1}{n}\sum_{i=1}^{n-1} \ind_{X_i \in I_q, X_{i+1} \in I_r} \right) + \frac{1}{n^2} \sum_{\substack{1\leq i,j\leq n\\ \|i-j\| \neq 1 }}\frac{\ind_{i \sim_{G_n} j} \ind_{X_i \in I_q, X_{j} \in I_r} }{\widehat{\lambda}_q^n \widehat{\lambda}_r^n }. \end{equation}
		By the ergodic theorem for Markov chain $X^{(n)}$, we have
		\[\lim_{n \rightarrow +\infty} \frac{1}{n}\sum_{i=1}^{n-1} \ind_{X_i \in  I_q, X_{i+1} \in I_r} = \E_m[ \ind_{X_0 \in I_q, X_1 \in I_r}] = \widetilde{\alpha}_q \pi_{qr}< +\infty.\] Since $\lim_{n \rightarrow +\infty}\widehat{\lambda}_q^n = \widetilde{\alpha}_q>0$ in probability, there exists a constant $c>0$ such that $c\leq \inf_{q\in \{1,\dots Q\}} \widetilde{\alpha}_q$ and
		\[ \lim_{n\rightarrow +\infty} \P\left(\frac{1}{\widehat{\lambda}_q^n \widehat{\lambda}_r^n n} \left(\frac{1}{n}\sum_{i=1}^{n-1} \ind_{X_i \in I_q, X_{i+1} \in I_r} \right) \leq \frac{1}{c^2 n} \left(\frac{1}{n}\sum_{i=1}^{n-1} \ind_{X_i \in I_q, X_{i+1} \in I_r} \right)\right)  = 1,\] and hence the first term in the right hand side of \eqref{etape1} converges to 0 in probability. \\
		
		Consider now the second term in the r.h.s. of \eqref{etape1}. Let us define the function
		\[f (G_n)= \frac{1}{n^2} \sum_{\substack{1\leq i,j\leq n\\ \|i-j\| \neq 1}}\ind_{i \sim_{G_n} j} \ind_{X_i \in I_q, X_{j} \in I_r},\] then $f$ is a function of the $n(n-1)/2 - (n-1) = (n-1)(n-2)/2$ random edges on $n$ vertices.
		We see that
		\[\E[f(G_n)]=\E\Bigg[ \frac{1}{n^2} \sum_{\substack{1\leq i,j\leq n\\ \|i-j\| \neq 1}}\ind_{i \sim_{G_n} j} \ind_{X_i \in I_q, X_{j} \in I_r} \Bigg] = \frac{(n-1)(n-2)}{n^2} \pi_{qr} \widetilde{\alpha}_q \widetilde{\alpha}_r .\]
		We have
		\begin{multline*}
		\P\left( \left| \frac{1}{n^2} \sum_{\substack{1\leq i,j\leq n \\ \|i-j\| \neq 1}}\frac{\ind_{i \sim_{G_n} j} \ind_{X_i \in I_q, X_{j} \in I_r} }{\widehat{\lambda}_q^n \widehat{\lambda}_r^n }  - \pi_{qr}\right| > \varepsilon\right)\\
		\begin{aligned}
		\leq &  \P\left( \frac{1}{\widehat{\lambda}_q^n \widehat{\lambda}_r^n} \big|  f(G_n) - \E[f(G_n)] \big| > \varepsilon - \left| \frac{1}{\widehat{\lambda}_q^n \widehat{\lambda}_r^n} \E[f(G_n)] - \pi_{qr} \right| \right) \nonumber\\
		= &  \P\left( \big|  f(G_n) - \E[f(G_n)] \big| > \varepsilon \widehat{\lambda}_q^n \widehat{\lambda}_r^n - |\E[f(G_n)] - \widehat{\lambda}_q^n \widehat{\lambda}_r^n \pi_{qr}| \right) \nonumber\\
		= & \P \left( \big|  f(G_n) - \E[f(G_n)] \big| > \varepsilon \widehat{\lambda}_q^n \widehat{\lambda}_r^n - \pi_{qr} \left| \frac{(n-1)(n-2)}{n^2} \widetilde{\alpha}_q \widetilde{\alpha}_r - \widehat{\lambda}_q^n \widehat{\lambda}_r^n \right| \right)
		\end{aligned}
		\end{multline*}
		For $c < \inf_{q\in \{1,\dots Q\}} \widetilde{\alpha}_q$,
		\begin{multline}\label{eq:macdiarmid1}
		\P \left( \big|  f(G_n) - \E[f(G_n)] \big| > \varepsilon \widehat{\lambda}_q^n \widehat{\lambda}_r^n - \pi_{qr} \left| \frac{(n-1)(n-2)}{n^2} \widetilde{\alpha}_q \widetilde{\alpha}_r - \widehat{\lambda}_q^n \widehat{\lambda}_r^n \right| \right) \\
		\leq \P \left( \big|  f(G_n) - \E[f(G_n)] \big| > c^2\varepsilon - \frac{c^3}{2} \varepsilon\right) \hspace*{5cm}
		\\+ \P\left(\left| \frac{(n-1)(n-2)}{n^2} \widetilde{\alpha}_q \widetilde{\alpha}_r - \widehat{\lambda}_q^n \widehat{\lambda}_r^n \right| > \frac{c^3\varepsilon}{2\pi_{qr}}\right) + \P(\widehat{\lambda}_q^n \widehat{\lambda}_r^n < c^2).
		\end{multline}
		Since $\lim_{n \rightarrow +\infty}\widehat{\lambda}_q^n = \widetilde{\alpha}_q>0$ in probability, for fixed $\varepsilon>0$,
		\[
		\lim_{n \rightarrow \infty}\P \left( \left| \frac{(n-1)(n-2)}{n^2} \widetilde{\alpha}_q \widetilde{\alpha}_r - \widehat{\lambda}_q^n \widehat{\lambda}_r^n \right| < \frac{c^3\varepsilon}{2\pi_{qr}} \ \text{ and } \  \widehat{\lambda}_q^n \widehat{\lambda}_r^n > c^2 \right) =1
		\] Thus the second and the third terms on the right hand side of \eqref{eq:macdiarmid1} tend to zero as $n$ tends to infinity. It remains the first term to be treated. When one edge is changed, the value of $f$ is changed by most $1/n^2$. Applying McDiarmid's concentration \cite{mcdiarmid} for function $f$ , we obtain:
		\begin{align*}
		\P \left( \big|  f(G_n) - \E[f(G_n)] \big| > c^2\varepsilon - \frac{c^3}{2} \varepsilon\right)	\leq 2\exp\left( - \frac{2(c^2 -\frac{c^3}{2})\varepsilon } {\frac{(n-1)(n-2)}{2} \frac{1}{n^4}  } \right)
		\leq 2e^{  -4n^2 c^2(1-c/2) \varepsilon}.
		\end{align*}
		Note that $0<c<1$ then $c^2 (1-c/2)>0$. We use Borel-Cantelli's Theorem to conclude that
		$ \lim_{n \rightarrow +\infty} \P \left( \big|  f(G_n) - \E[f(G_n)] \big| > c^2\varepsilon - \frac{c^3}{2} \varepsilon\right) =0$ and hence,
		\[  \left| \frac{1}{n^2} \sum_{\substack{1\leq i,j\leq n\\ \|i-j\| \neq 1}}\frac{\ind_{i \sim_{G_n} j} \ind_{X_i \in I_q, X_{j} \in I_r} }{\widehat{\lambda}_q^n \widehat{\lambda}_r^n }  - \pi_{qr}\right| \longrightarrow 0\]
		in probability as $n \rightarrow \infty$. This finishes the proof for Case 1.\\
		
		\noindent \textbf{Case 2, $q=r$:} The proof follows by similar arguments, with notice that there are a few modifications because the expression of $N_n^{q \leftrightarrow q}$ is slightly different:
		\[ N^{q \leftrightarrow q}_n = \sum_{i=1}^{n-1} \ind_{X_i \in I_q, X_{i+1} \in I_q} + \frac{1}{2} \sum_{\substack{1\leq i,j\leq n\\ \|i-j\| \neq 1}}\ind_{i \sim_{G_n} j} \ind_{X_i \in I_q, X_j \in I_q}.\] Then,
		\begin{equation}\label{etape2} \widehat{\pi}^n_{qq} = \frac{1}{\widehat{\lambda}_q^n \big(n\widehat{\lambda}_q^n -1\big)} \left(\frac{1}{n}\sum_{i=1}^{n-1} \ind_{X_i \in I_q, X_{i+1} \in I_q} \right) + \frac{1}{n^2} \sum_{\substack{1\leq i,j\leq n\\ \|i-j\| \neq 1}}\frac{\ind_{i \sim_{G_n} j} \ind_{X_i \in I_q, X_{j} \in I_q} }{\widehat{\lambda}_q^n\big(\widehat{\lambda}_q^n -1/n \big) } \end{equation}
		We have that the first term on r.h.s. of \eqref{etape2} converges in probability to $0$ as in case 1. For the second term on r.h.s. of \eqref{etape2}, we define the function $f$ as in Case 1 by
		\[f (G_n)= \frac{1}{2n^2} \sum_{\substack{1\leq i,j\leq n\\ \|i-j\| \neq 1}}\ind_{i \sim_{G_n} j} \ind_{X_i \in I_q, X_{j} \in I_q},\]
		For a fixed $\varepsilon>0$,
		\begin{align*}
		\P &\left( \big| \frac{1}{n^2} \sum_{\substack{1\leq i,j\leq n\\ \|i-j\| \neq 1}}\frac{\ind_{i \sim_{G_n} j} \ind_{X_i \in I_q, X_{j} \in I_q} }{\widehat{\lambda}_q^n\big(\widehat{\lambda}_q^n -1/n \big) }  - \pi_{qq} \big| > \varepsilon \right) \\
		&\leq \P \left( \big|  f(G_n) - \E[f(G_n)] \big| > \varepsilon \widehat{\lambda}_q^n \big(\widehat{\lambda}_q^n -1/n \big) \right. \\ \ & \left.  \hspace{3cm}- \pi_{qq} \left| \frac{(n-1)(n-2)}{n^2} (\widetilde{\alpha}_q)^2 - \widehat{\lambda}_q^n \big(\widehat{\lambda}_q^n-1/n\big) \right| \right)\\
		&\leq \P \left( \big|  f(G_n) - \E[f(G_n)] \big| > c\big(c-\frac{1}{n}\big)\varepsilon - \frac{c^3}{2} \varepsilon\right) + \P(\widehat{\lambda}_q^n < c) \\
		&\hspace{2cm} + \P\left(\left| \frac{(n-1)(n-2)}{n^2} (\widetilde{\alpha}_q)^2 - \widehat{\lambda}_q^n\big(\widehat{\lambda}_q^n-\frac{1}{n}\big) \right| > \frac{c^3\varepsilon}{2\pi_{qq}}\right).
		\end{align*}
		As in Case 1, the second and the third term on r.h.s. of above inequality are negligible. Applying McDiarmid's concentration for $f$ with notice that when changing 1 edge in $G_n$, the value of $f$ changes at most $1/n^2$,
		\begin{align*}
		\P \left( \big|  f(G_n) - \E[f(G_n)] \big| > c(c-1/n)\varepsilon - \frac{c^3}{2} \varepsilon\right)	&\leq 2\exp\left( - \frac{2(c^2- c/n -\frac{c^3}{2})\varepsilon } {\frac{(n-1)(n-2)}{2} \frac{1}{n^4}  } \right)\\
		&\leq 2e^{  -2(n^2 c^2(1-c/2) -nc) \varepsilon}.
		\end{align*}
		Finally, using Borel-Cantelli's Theorem, $|f(G_n) - \E[f(G_n)]|\rightarrow 0$ almost surely as $n$ tends to infinity. Thus, the point (i) is proved.


\subsubsection{Proof of Proposition \ref{prop:GnChin}: Limit of $\widehat{\chi}_n$}\label{sec:limit_chin}
	
		For the point (ii), we have:
		\begin{align*}
		d_{\sub}(\widehat{\chi}_n, \kappa_{\widehat{\theta}})\leq  & d_{\sub}(\widehat{\chi}_n, G_n)+d_{\sub}(G_n, \kappa_{\widehat{\theta}}).
		\end{align*}The first term in the right hand side is treated by point (i). The second term is the Proposition \ref{th:athreya} shown in \cite[Corollary 2.2]{athreyaroellin1}.\\

Let us now consider the point (i). For the sake of simplicity, we assume for the proof that there are two classes of vertices in the graph, i.e. $Q=2$. The proof can be generalized to general $Q$ by following the same steps. Our parameters' notations are simplified as $\lambda_1^n =: \lambda_n$ and $\lim_{n \rightarrow +\infty}\lambda^n_1=: \widetilde{\alpha} = \Gamma(\alpha)$.\\
	
	Our purpose is to prove a convergence of graphons for the distance $d_{sub}$ introduced in \eqref{eq:dsub} using the densities \eqref{eq:tFG}. If $F$ is an edge (meaning that $F = K_2$, the complete graph of $2$ vertices), then the density of $F$ in $G_n := G(X_n, H_n, \kappa)$ is the proportion of edges,
	\begin{align*}
	t(F, G_n) & = \frac{1}{n(n-1)} \sum_{\ell,\ell'\in \lbrac 1,n\rbrac} \ind_{\ell\sim_{G_n} \ell'}\\
	\text{and} \quad t(F, \chi_n) & = \int\limits_{[0,1]^2} \widehat{\chi}_n(x_1,x_2)dx_1dx_2= \sum_{q, r =1}^Q\widehat{\lambda}_q^n\widehat{\lambda}_r^n \widehat{\pi}_{qr}^n .
	\end{align*}
	%
	In general case, if $F$ is a graph of $k$ vertices,
	\begin{align}
	t(F,G_n) &=\frac{1}{(n)_k}\sum_{(i_1,\cdots i_k)\in \lbrac 1,n\rbrac}\prod_{\{\ell,\ell'\}\in E(F)}\ind_{i_\ell \sim_G i_{\ell'}}\\
	t(F,\chi_n)& = \int_{[0,1]^k} \prod_{\{\ell,\ell'\}\in E(F)} \left(\sum_{q,r =1}^Q\widehat{\pi}^{qr}_n \ind_{J^n_q\times J^n_r}(x_\ell,x_{\ell'}) \right) dx_1\cdots dx_k
	\end{align}
	
	Let us first consider the case where $F$ is an edge.
	\begin{multline*}
	|t(F, G_n) - t(F, \chi_n)|  =  \left|\frac{1}{(n)_2} \sum_{(i,j)\in \lbrac 1,n\rbrac}\ind_{i\sim_{G_n} j}- \int_{[0,1]^2} \widehat{\chi}_n(x_1,x_2) \ dx_1 dx_2\right|\\
	\begin{aligned}
	\leq & \left|\frac{1}{(n)_2} \sum_{(i,j)\in \lbrac 1,n\rbrac}\left(\ind_{i\sim_{G_n} j}-\widehat{\pi}_{Z_i,Z_j}\right) \right|\\
	& +\left|\frac{1}{(n)_2} \sum_{(i,j)\in \lbrac 1,n\rbrac}\widehat{\pi}_{Z_i,Z_j} -  (\widehat{\lambda}^n_1)^2 \widehat{\pi}_{11}^n- 2 \widehat{\lambda}^n_1(1-\widehat{\lambda}^n_1) \widehat{\pi}_{12}^n - (1-\widehat{\lambda}^n_1)^2 \widehat{\pi}_{22}^n\right|
	\end{aligned}
	\end{multline*}
	\begin{align*}
	\leq & \left|\frac{1}{(n)_2} \sum_{(i,j)\in \lbrac 1,n\rbrac}\left(\ind_{i\sim_{G_n} j}-\widehat{\pi}_{Z_i,Z_j}\right) \right|
	+\left|\widehat{\pi}^n_{11}\left(\sum_{(i,j)\ |\ (Z_i,Z_j)=(1,1)} \frac{1}{(n)_2}- (\widehat{\lambda}^n_{1})^2\right)\right|\\
	& +\left|\widehat{\pi}^n_{22}\left(\sum_{(i,j)\ |\ (Z_i,Z_j)=(2,2)}\frac{1}{(n)_2}- (1-\widehat{\lambda}^n_{1})^2\right)\right|\\
	&+\left|\widehat{\pi}^n_{12}\left(\sum_{\substack{(i,j)\ |\ (Z_i,Z_j)=(1,2) \\ \mbox{or} (Z_i,Z_j)=(2,1)}}\frac{1}{(n)_2}- 2\widehat{\lambda}^n_{1}(1-\widehat{\lambda}^n_{1})\right)\right|.
	\end{align*}
	By the law of large numbers and using \eqref{eq:cv_rho_gamma} whose proof does not depend on the Proposition \ref{prop:GnChin}, the four terms converge to zero.\\
	
	In the general case, proceeding in a similar way leads to:
	\begin{align*}
	|t(F, G_n) - t(F, \chi_n)| 	\leq &   \left| \frac{1}{(n)_k}\sum_{(i_1,\cdots i_k)\in \lbrac 1,n\rbrac}\prod_{\{\ell,\ell'\}\in E(F)}\ind_{i_\ell \sim_G i_{\ell'}} \right.\\
	&\left. - \frac{1}{(n)_k} \sum_{(i_1,\cdots ,i_k)} \prod_{\{\ell,\ell'\}\in E(F)} \left( \sum_{q,r =1}^Q \widehat{\pi}_{qr}^n \ind_{Z_{i_\ell}=q,Z_{i_{\ell'}}=r} \right) \right|\\
	& 	+ \left| \frac{1}{(n)_k} \sum_{(i_1,\cdots ,i_k)} \prod_{\{\ell,\ell'\}\in E(F)} \left( \sum_{q,r =1}^Q \widehat{\pi}_{qr}^n \ind_{Z_{i_\ell}=q,Z_{i_{\ell'}}=r} \right) \right.\\
	&\left. - \frac{1}{n^k} \sum_{1\leq i_1,\cdots ,i_k \leq n} \prod_{\{\ell,\ell'\}\in E(F)} \left( \sum_{q,r =1}^Q \widehat{\pi}_{qr}^n \ind_{Z_{i_\ell}=q,Z_{i_{\ell'}}=r} \right) \right|\\
	& + \left| \frac{1}{n^k} \sum_{1\leq i_1,\cdots ,i_k \leq n} \prod_{\{\ell,\ell'\}\in E(F)} \left( \sum_{q,r =1}^Q \widehat{\pi}_{qr}^n \ind_{Z_{i_\ell}=q,Z_{i_{\ell'}}=r} \right)  \right.\\
	& \left.-  \int_{[0,1]^k} \prod_{\{\ell,\ell'\}\in E(F)} \left(\sum_{q,r =1}^Q\widehat{\pi}_{qr}^n \ind_{J^n_q\times J^n_r}(x_\ell,x_{\ell'}) \right) dx_1\cdots dx_k   \right|
	\end{align*}

	As $\prod_{\{\ell,\ell'\}\in E(F)}\ind_{i_\ell \sim_G i_{\ell'}} $ and $\prod_{\{\ell,\ell'\}\in E(F)} \left( \sum_{q,r =1}^Q \widehat{\pi}_{qr}^n \ind_{Z_{i_\ell}=q,Z_{i_{\ell'}}=r} \right)$ are bounded by $1$, there exist $c(k)$ such that the first term and the second term in the right hand side are bounded by $c(k)/n$.
	For the third term, it is equal to
	\begin{align*}
	\left| \sum_{1 \leq q_1,...,q_k\leq Q}  \prod_{\{\ell, \ell '\} \in E(F)} \widehat{\pi}_{q_\ell,q_{\ell'}}^n \left(  \frac{1}{n^k} \sum_{1\leq i_1,\cdots ,i_k \leq n} \ind_{Z_{i_1}=q_{i_1},\cdots, Z_{i_k} = q_{i_k}} \hspace*{2cm}\right.
	\right.\\ \left.  \left. - \int_{[0,1]^k} \prod_{h=1}^{k} \ind_{J_{q_h}^n} (x_h) dx_1 \cdots dx_k \right) \right|	
	\end{align*}
	Since $0 \leq \prod_{\{\ell, \ell '\} \in E(F)} \widehat{\pi}_{q_\ell,q_{\ell'}}^n\leq 1$ and  $\{ Z_{i_1}=q_{i_1},\cdots, Z_{i_k} = q_{i_k} \} = \{ \Gamma(X_{i_1}) \in J_{q_1}, \cdots, \Gamma(X_{i_k}) \in J_{q_k} \}$, the third term is thus bounded by
	\begin{multline*}
	\sum_{1 \leq q_1,...,q_k\leq Q}  \left| \frac{1}{n^k} \sum_{1\leq i_1,\cdots ,i_k \leq n} \ind_{\Gamma(X_{i_1}) \in J_{q_1}, \cdots, \Gamma(X_{i_k}) \in J_{q_k} } -  \int_{[0,1]^k} \prod_{h=1}^{k} \ind_{J_{q_h}^n} (x_h) dx_1 \cdots dx_k \right|\\
	\begin{aligned}
	= &  \sum_{1 \leq q_1,...,q_k\leq Q} \left| \frac{1}{n^k}\sum_{1\leq i_1,\cdots ,i_k \leq n} \prod_{\ell=1}^k \ind_{\Gamma(X_{i_\ell})\in J_{i_\ell}} - \prod_{\ell=1}^k \int_{[0,1]} \ind_{J^n_{i_\ell}}dx_\ell  \right|\\
	= & \sum_{1 \leq q_1,...,q_k\leq Q} \left| \frac{\prod_{\ell=1}^{k} \sum_{i_\ell=1}^n \ind_{\Gamma(X_{i_\ell}) \in J_{q_l}}}{n^k} - \prod_{\ell=1}^k \int_{J^n_{q_\ell}}dx_\ell \right|\\
	= & \sum_{1 \leq q_1,...,q_k\leq Q} \left| \prod_{\ell=1}^k \frac{N_n^{q_\ell}}{n} - \prod_{\ell=1}^k \widehat{\lambda}_{q_\ell}^n\right| =0.
	\end{aligned}
	\end{multline*}
	Hence $\lim_{n\rightarrow +\infty} |t(F,G_n)-t(F,\chi_n)|=0$. Because $t(F,G_n)$ and $t(F,\chi_n)$ are bounded independently from $n$, this provides the announced result.

	\subsection{Incomplete observations and graphon de-biasing}\label{sec:incomplete_graphon}
	
\subsubsection{Case where $Z_i$ is unobserved but $X_i$ is}\label{sec:incompletegraphon1}
	
	In Proposition \ref{eq:estimatorsAR}, it is shown that the `classical' SBM estimator \eqref{estimateurs:classiques} obtained by neglecting the bias coming from the sampling scheme can be corrected by using the inverse of the cumulative distribution function $\Gamma$ of $m$. When the types are unobserved, we proceed in the same way. We assume here that the types $Z_i$ are unobserved, but we need the observation of the marks $X_i$, otherwise no de-biasing is permitted since the cumulative distribution function $\Gamma$ can not be estimated. We detail this estimation procedure in the case $Q=2$ for the sake of simplicity, but generalization is straightforward.\\
	
	\noindent \textbf{Step 1:} First, we perform an estimation of the SBM neglecting the sampling biases.
	\begin{itemize}
		\item We follow the algorithm described in Section \ref{sec:SAEM}, but with the likelihood $\mathcal{L}^{\class}(Z,Y ; \theta)$ given in \eqref{vraisemblance-classique}. We denote the parameter here by $\theta=(\lambda_1,1-\lambda_1 , \pi_{11}, \pi_{12},\pi_{21},\pi_{22})$.\\
		\item For the proposal distribution of the types $Z^c$, it is simpler since we assume that the $X_i$'s are known. Assume that we are at step $k$ and that we dispose of the parameters $\theta^{(k-1)}$. We initialize the types by attributing the types 1 to the $X_i \leq \lambda^{(0)}$ and 2 to the others. At each step, the threshold is modified from $\lambda^{(k-1)}_1$ to $\lambda^{(k)}_1$ by following a random walk: a gaussian increment (mean 0 and variance $s^2$) is added. All the $X_i$ smaller than this increment are given the type $Z_i=1$ and the others the type $Z_i=2$.
	\end{itemize}
	
	This Step 1 corresponds to a variational EM for the classical likelihood \eqref{vraisemblance-classique}, for which the consistency and asymptotic normality have been established by C\'elisse et al. \cite{celissdaudinpierre2012} and Bickel et al. \cite{bickelchoichangzhang}.	\\
	
	\noindent \textbf{Step 2:} We estimate the cumulative distribution function $\Gamma_n$ (see \eqref{eq:Gamman}) and deduce the graphon estimator $\widehat{\alpha}^n_1$ of $\alpha_1$ using \eqref{eq:widehatalpha}. This provides the estimator of $\kappa_{\theta}$:
	\begin{equation}
	\widehat{\kappa}_n(x,y):= \sum_{q=1}^Q \sum_{r=1}^Q \widehat{\pi}^n_{qr} \ind_{[\sum_{k=1}^{q-1} \widehat{\alpha}^n_k,\sum_{k=1}^{q} \widehat{\alpha}^n_k)}(x)\ind_{[\sum_{k=1}^{r-1} \widehat{\alpha}^n_k,\sum_{k=1}^{r} \widehat{\alpha}^n_k)}(y).
	\end{equation}

	\subsubsection{Case where both $X_i$ and $Z_i$ are unobserved}\label{sec:Referee}

When both $X_i$ and $Z_i$ are unobserved, it is not possible to compute the empirical cumulative distribution function $\Gamma_n$ any more. Thus, Equation \eqref{eq:widehatalpha} can not be used any more to obtain an estimator of $\alpha_q$ from an estimator of $\widetilde{\alpha}_q$.

As pointed out by an anonymous Referee, from \eqref{notation:pibar} and \eqref{def:alphatilde}, we can write that
\begin{equation}\label{eq:referee}
\widetilde{\alpha}_q=\frac{\alpha_q \bar{\pi}_q}{\bar{\pi}} ,\ \mbox{ for all }q\in \{1,\dots Q\}\quad \Leftrightarrow   \tilde{\alpha} = \frac{\alpha \odot (\pi \alpha)}{\alpha^{T} \pi \alpha},
\end{equation}in vectorial form, where $\odot$ is the Kronecker product of two vectors.
Then an estimator $\widehat{\alpha}$ for the vector $\alpha=(\alpha_1,\dots \alpha_Q)$ can be obtained from solving the equation:
\begin{align}\label{eq:alphaetpi}
\big(\widehat{\alpha}^{T} \widehat{\pi} \widehat{\alpha}\big) \widehat{\lambda} =\widehat{ \alpha} \odot (\widehat{\pi} \widehat{\alpha}).
\end{align}

\paragraph{For $Q=2$:} In this case, under the constraint $\widehat{ \alpha}_1 + \widehat{ \alpha}_2 = 1$, and equation \eqref{eq:alphaetpi} is written simply as:
\begin{align*}
\widehat{\lambda}_1 = \frac{\widehat{ \alpha}_1\big( \widehat{\pi}_{11} \widehat{\alpha}_1 + \widehat{\pi}_{12}(1-\widehat{\alpha}_1)  \big)}{\widehat{\pi}_{11}\widehat{\alpha}_1^2 + 2\widehat{\pi}_{12}\widehat{\alpha}_1(1- \widehat{\alpha}_1) + \widehat{\pi}_{22}(1- \widehat{\alpha}_1)^2}.
\end{align*}
It leads to a quadratic equation of $\widehat{\alpha}_1$ as follow:
\begin{align*}
	\bigg[  \big(  \widehat{\pi}_{11} + \widehat{\pi}_{22} - 2\widehat{\pi}_{12} \big) \widehat{\lambda} - (\widehat{\pi}_{11}- \widehat{\pi}_{12}) \bigg] \widehat{\alpha}_1^2 + \bigg[ 2( \widehat{\pi}_{12} -  \widehat{\pi}_{22}) \widehat{\lambda} -  \widehat{\pi}_{12} \bigg] \widehat{\alpha}_1 +  \widehat{\pi}_{22} \widehat{\lambda} =0.
\end{align*}Solving this second order equation,
\begin{equation}
\Delta
=\pi_{12}^2(2\lambda-1)^2+4\pi_{11}\pi_{22}\lambda(1-\lambda)\geq 0.
\end{equation}Hence, there are two solutions:
\begin{align}
\widehat{\alpha}_1=-\frac{\bigg[2( \widehat{\pi}_{12} -  \widehat{\pi}_{22}) \widehat{\lambda} -  \widehat{\pi}_{12} \bigg]\pm \sqrt{\pi_{12}^2(2\lambda-1)^2+4\pi_{11}\pi_{22}\lambda(1-\lambda)}}{2\big(\widehat{\pi}_{11} + \widehat{\pi}_{22} - 2\widehat{\pi}_{12} \big) \widehat{\lambda} - (\widehat{\pi}_{11}- \widehat{\pi}_{12})\big)}.
\end{align}
These solutions can be computed numerically.\\

\paragraph{For $Q\geq 3$:} Equation \eqref{eq:alphaetpi} is written as:
 $
	\big(\widehat{\alpha}^{T} \widehat{\pi} \widehat{\alpha} \big) \widehat{\lambda} - \widehat{\alpha} \odot (\widehat{\pi}\widehat{\alpha}) = 0.
$
Consider the function $g$ defined on $S= \{x= (x_1, \cdots, x_Q)\in [0;1]^Q: x_1+ ...+x_Q = 1\}$, \[g(x)= \big(\x^{T}\widehat{\pi} x \big) \widehat{  \lambda}- x \odot(\widehat{\pi} x). \]
It leads to solve the optimization problem
\[  \min_{x \in S} \|g(x)\|. \]

	\section{Numerical results}\label{sec:numresults}

	For the simulation, we consider RDS graphs obtained from the exploration of SBM graphons with $Q= 2$ classes, of respective proportions $\alpha_1=2/3$ and $\alpha_2=1/3$. The connection probabilities are:
	\[ \pi=\left(\begin{array}{cc}
	0.7 & 0.4 \\
	0.4 & 0.8\end{array}\right).\]
	The RDS graphs consist of $n = 50$ vertices. \\

	We proceed to the four estimations presented in this paper: 	
	\begin{itemize}
		\item Maximum likehood on complete data: the algorithm of Section \ref{sec:completeobs} for complete observations by assuming that the types $Z_i\in \{1 , 2\}$ are observed.
		\item SAEM: the algorithm of Section \ref{sec:SAEM} when the types $Z_i$ are unobserved. The SAEM is based on an iteration on $k$ and we perform $K=200$ iterations.
		\item De-biased graphon: the computation of the estimators given in Proposition \ref{eq:estimatorsAR} assuming complete observations,
		\item De-biased graphon with SAEM: again, we use an SAEM algorithm for the likelihood \eqref{vraisemblance-classique}, and then use the same de-biaising technique as in item 3 above (see Section \ref{sec:incomplete_graphon}). Again, we use $K=200$ iterations for the SAEM iterations.
\item De-biaised graphon by solving the algebraic equation for $\alpha_q$ \eqref{def:alphatilde}.
	\end{itemize}
	We proceed to a Monte-Carlo study of the estimators' distributions. We simulate 200 RDS graphs, and for each of them, apply the four estimation strategies. The empirical distribution of the estimators are represented in Fig. \ref{fig:complete}, and this allows us to estimate the associated mean squares errors (MSE) for each method, see Table \ref{tab:MSE-complete-obs_CI_exactes}.

	\begin{figure}[!ht]
		\centering
		\begin{tabular}{cc}
			\includegraphics[width=6cm,height=6.5cm]{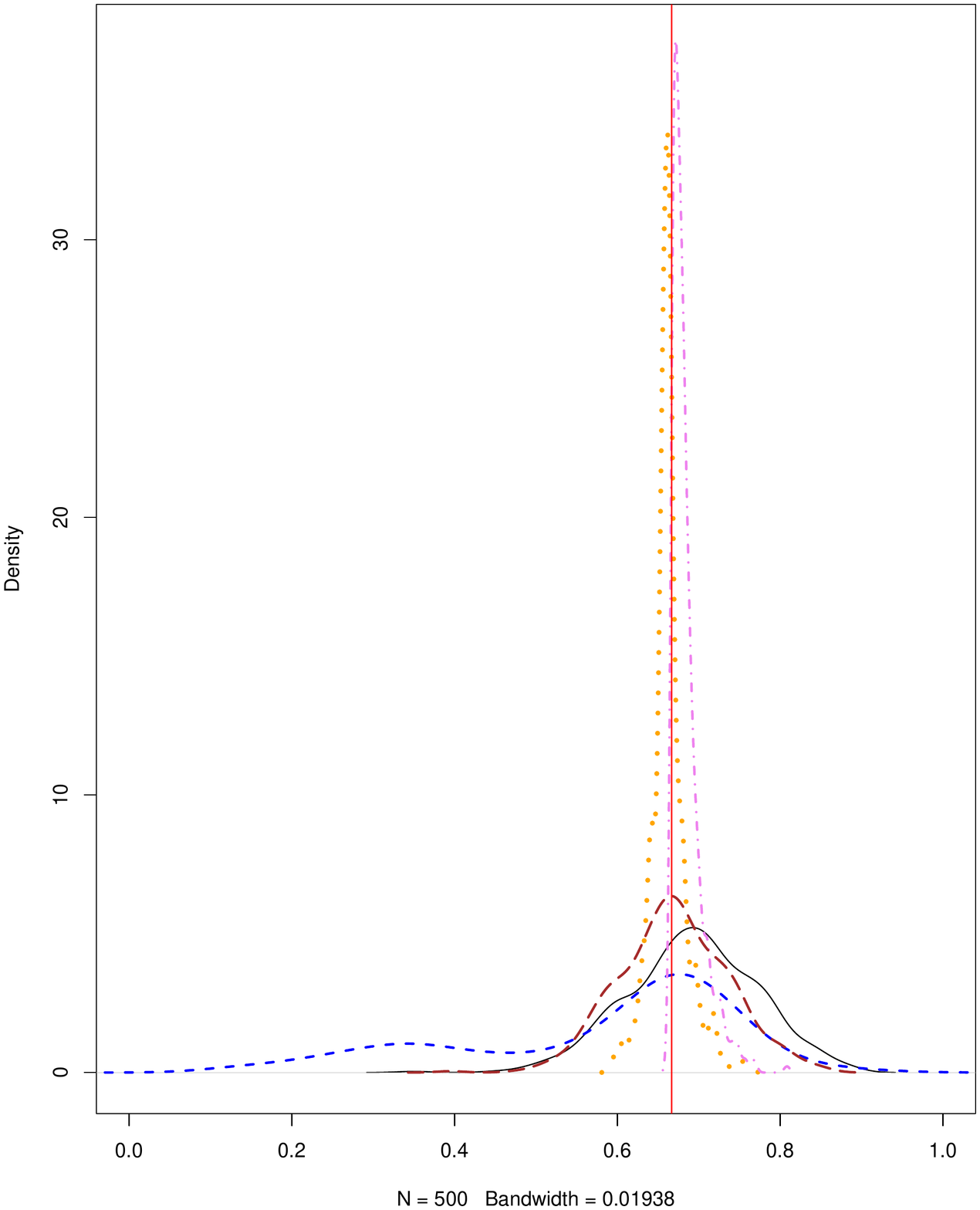} & \includegraphics[width=6cm,height=6.5cm]{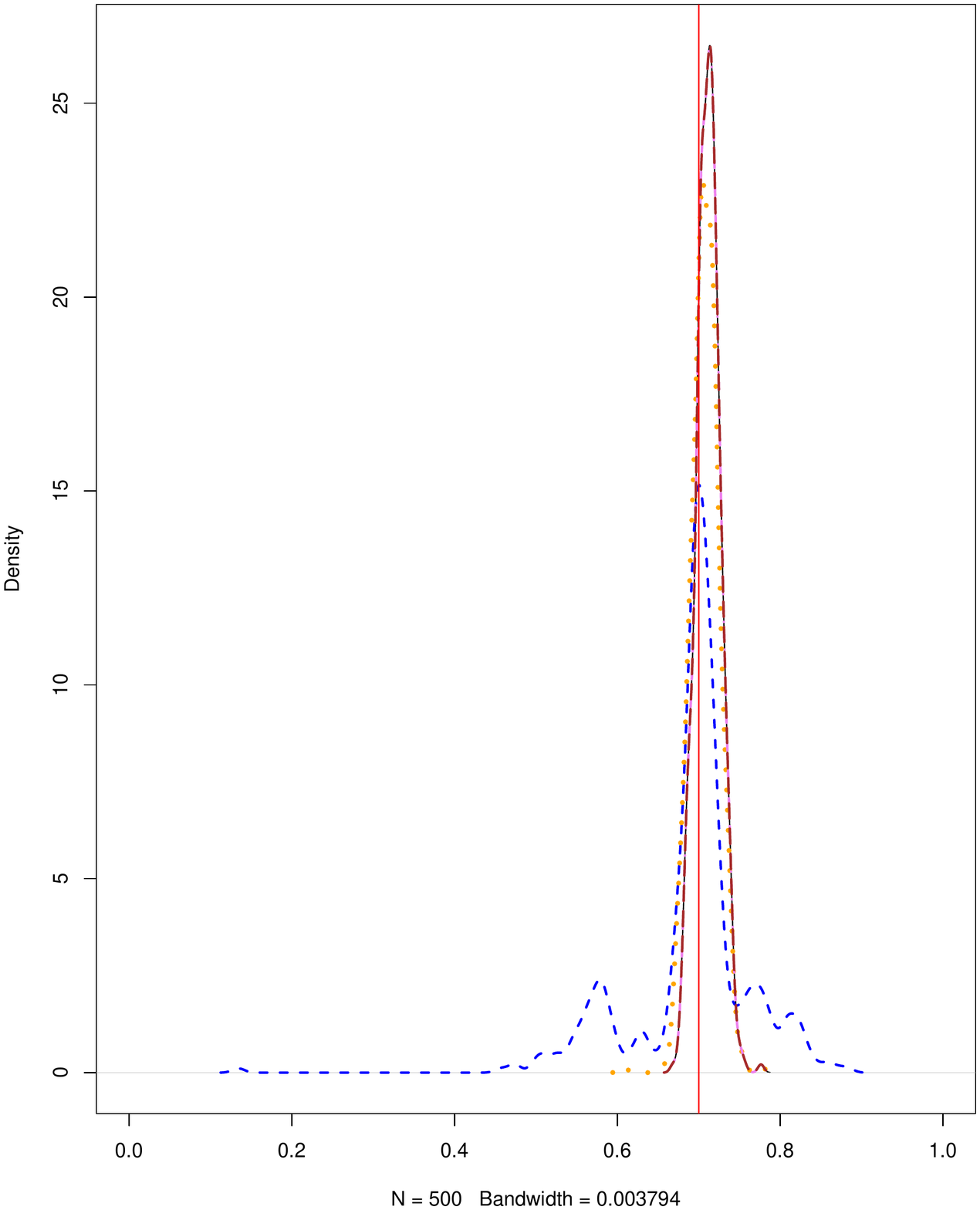}\\
			(a): $\alpha$ & (b): $\pi_{11}$ \\
			\includegraphics[width=6cm,height=6.5cm]{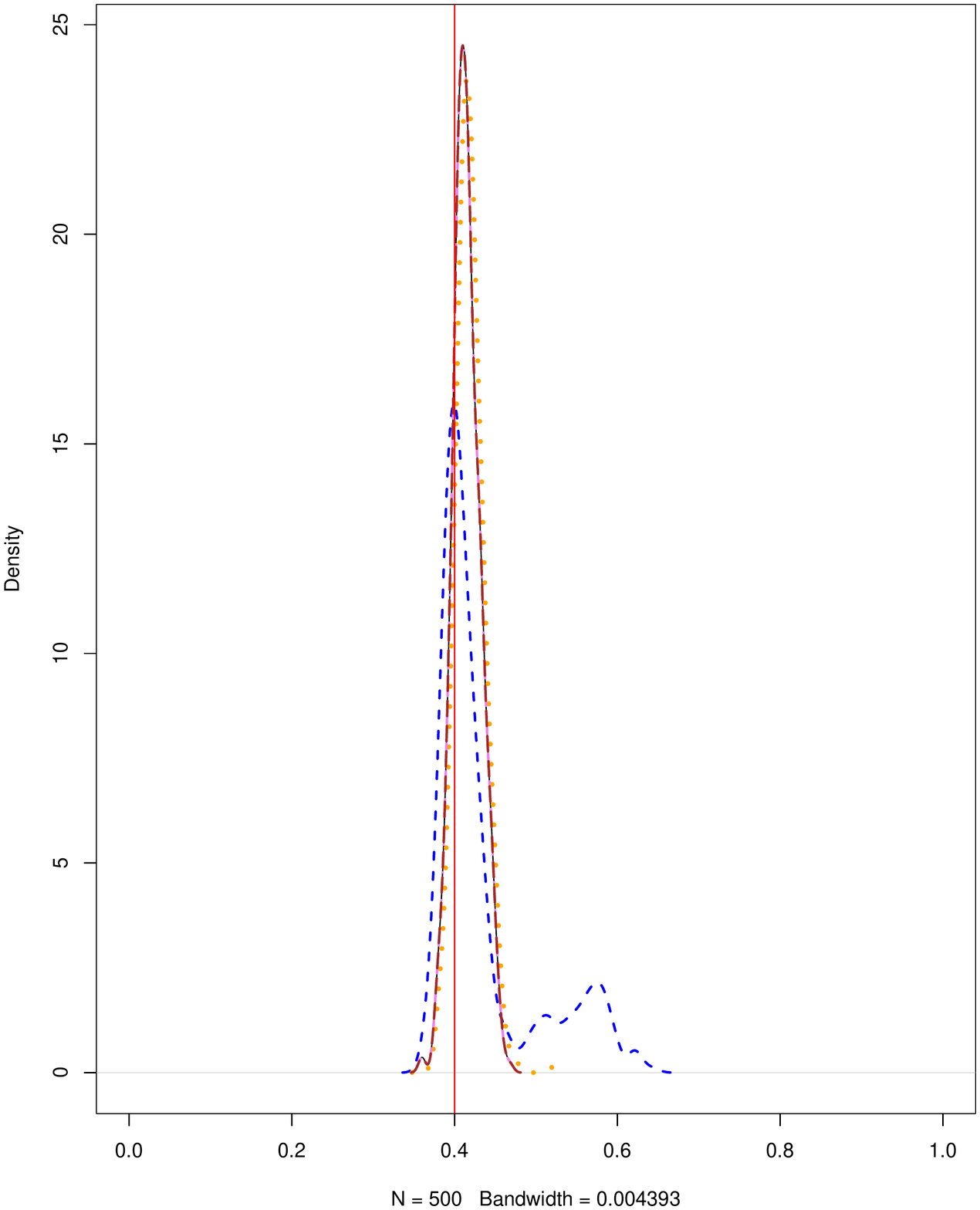} & \includegraphics[width=6cm,height=6.5cm]{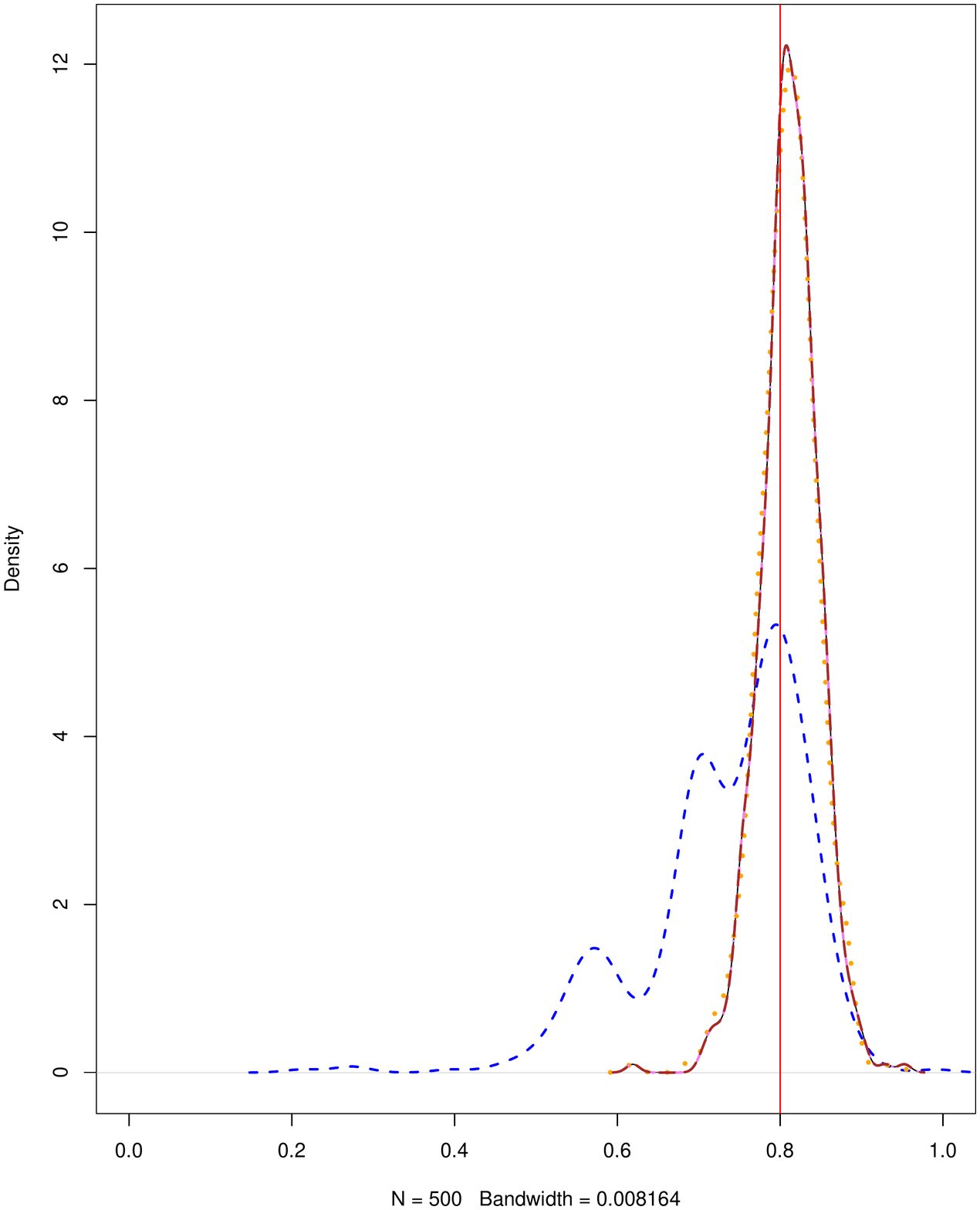}\\
			(c): $\pi_{12}$ & (d): $\pi_{22}$
		\end{tabular}
		\caption{{\textit \tvc{Estimation on complete data for a graph of $n=60$ vertices with $Q=2$ classes and parameters $\alpha_1=2/3$, $\pi_{11}=0.7$, $\pi_{12}=\pi_{21}=0.4$ and $\pi_{22}=0.8$. 500 such graphs are simulated and the empirical distributions of the estimators are represented here with the true parameters in red line. On each graph: the MLE with complete observation (Section \ref{sec:completeobs}) is in continuous black line, the SAEM estimator (Section \ref{sec:likelihood-SAEM}) is in blue dashed line, the graphon estimator with complete observation (Section \ref{sec:completeobs2}) is in dash-dotted pink line, the graphon estimator with incomplete observation and SAEM algorithm (Section \ref{sec:incompletegraphon1}) is in yellow dotted line, the  graphon estimator with incomplete observation and algebraic equations (Section \ref{sec:Referee}) is in brown long-dashed line. The Graphon (a): estimator of $\alpha$, (b): estimator of $\pi_11$, (c): estimator of $\pi_{12}$, (d) estimator of $\pi_{22}$.} }}\label{fig:complete}
	\end{figure}

\tvc{	\begin{table}[!ht]
		\centering
		\begin{tabular}{c|cc|ccc}
			& Complete & SAEM & De-biased & De-biased & De-biased\\
			Parameters & likelihood  & & graphon & graphon with SAEM & graphon with alg. eq. \\
			\hline
			$\pi_{11}$ & $3.52\ 10^{-4}$  & $5.25\ 10^{-3}$ & $3.52\ 10^{-4}$  & $3.54\ 10^{-4}$ & $3.54\ 10^{-4}$ \\
			$\pi_{12}$ & $4.99\ 10^{-4}$  & $5.14\ 10^{-3}$  & $4.99\ 10^{-4}$  & $6.65\ 10^{-4}$ & $4.99\ 10^{-4}$\\
			$\pi_{22}$ & $1.41\ 10^{-3}$ &  $1.45\ 10^{-2}$  & $1.41\ 10^{-3}$  & $1.42\ 10^{-3}$& $1.41\ 10^{-3}$\\
			$\alpha$ &   $7.01\ 10^{-3}$ &  $3.80\ 10^{-2}$  & $6.80\ 10^{-4}$  & $5.31\ 10^{-4}$ & $4.51\ 10^{-3}$
		\end{tabular}
		\caption{\textit{Mean square errors. }}\label{tab:MSE-complete-obs_CI_exactes}
	\end{table}
	}
\tvc{	Without surprise, for the maximum likelihood estimation, the estimation is better when we have complete observations (compare columns 1 and 2). Note that the use of the SAEM algorithm could be accelerated, which is discussed in the conclusion. For the graphon de-biasing, the methods with incomplete observations perform well, sometimes equally to the methods with complete observations.\\
	}
	
	When the types $Z_i$ are not observed, we achieve better MSEs with the debiasing of the classical SAEM method of Daudin et al. (column 4 of Table \ref{tab:MSE-complete-obs_CI_exactes}). Notice first that the columns 2 and 4 of Table \ref{tab:MSE-complete-obs_CI_exactes} are not completely equivalent, since the debiasing methods of Section \ref{sec:graphon} necessitate the knowledge of the positions $X_i$ of the Markov chain, when the likelihood \eqref{eq:likelihood-in-fctofN} necessitates only the connections $Y_{ij}$ and the types $Z_i$'s. Second, the updating of the types in the SAEM algorithm is easier in Section \ref{sec:incomplete_graphon} when the $X_i$'s are known since it amounts to choosing the threshold that separates the types 1 and 2. Finally, the SAEM algorithm on the classical likelihood \eqref{vraisemblance-classique} seems to converge more easily than for the likelihood \eqref{eq:likelihood-in-fctofN}.
	
	\section{Conclusion}
	
	\tvc{	Four statistical methods are studied in this paper, for estimating SBM parameters using a subgraph obtained from the exploration of the graphon by a Markov chain:
		\begin{itemize}
			\item Two methods built on the maximum likelihood.
			\begin{itemize}
				\item The first one is the classical maximum likelihood estimator on the complete data, and necessitates the observation of the types $Z_i$'s and the edges of $G_n$, $Y_{ij}$'s. See Section \ref{sec:completeobs}.
				\item The second method is an SAEM estimation procedure that can be used when only the connectivities $Y_{ij}$'s are observed.
			\end{itemize}
			\item Three methods built on the de-biasing formula of Athreya and R\"ollin \cite{athreyaroellin1}.
			\begin{itemize}
				\item The first one is on the complete data, and necessitates the observation of the positions $X_i$'s, the types $Z_i$'s and the edges of $G_n$, $Y_{ij}$'s. See Section \ref{sec:completeobs2}.
				\item The second method is a variation started from the SAEM estimation procedure of Daudin et al. when there is no sampling bias. The latter estimation can be used when only the connectivities $Y_{ij}$'s are observed, but the de-biasing using the cumulative distribution function $\Gamma$ needs information on the positions $X_i$'s (but not the complete knowledge of the types $Z_i$'s).
\item The last one solves an algebraic equation satisfied by the $\alpha_q$'s and obtained from \eqref{def:alphatilde}. This method does not require the knowledge of the $X_i$'s but only of the $Y_{ij}$'s.
			\end{itemize}
		\end{itemize}
	}
	This is a toy model for estimating random networks from chain-referral sampling techniques and there exist sampling biases. The two first methods compute the maximum likelihood estimator when the types of the nodes are known or unknown. On simulations, it appears that the SAEM algorithm used when the types are unobserved is not very robust and provides relatively large MSEs. \tvc{However, the relatively rough SAEM algorithm that we use here might be improved by using Metropolis-Hastings and Gibbs algorithms with refined exploration of the state space of the $Z_i$'s. }\\
	An alternative approach is proposed by taking advantage of recent results by Athreya and R\"ollin \cite{athreyaroellin1}: this allows to correct the classical SBM estimators that would be proposed if one ignores the sampling biases. These methods provide good estimators but rely on the precise knowledge of the Markov chain exploring the SBM graphon (in particular the positions $X_i$'s), which is not always available.


\begin{thebibliography}{99}
		\bibitem{abbe}
		E.~Abbe.
		\newblock Community detection and stochastic block models: recent development.
		\newblock {\em Journal of Machine Learning Research}, 18(177):1--86, 2018.
		
		\bibitem{allmanmatiasrhodes}
		E.~Allman, C.~Matias, and J.~Rhodes.
		\newblock Parameter identifiability in a class of random graph mixture models.
		\newblock {\em Journal of Statistical Planning and Inference},
		141(5):1719--1736, 2011.
		
		\bibitem{athreyaroellin1}
		S.~Athreya and A.~R\"ollin.
		\newblock Dense graph limits under respondent-driven sampling.
		\newblock {\em Annals of Applied Probability}, 44:2193--2210, 2016.
		
		\bibitem{bickelchoichangzhang}
		P.~Bickel, D.~Choi, X.~Chang, and H.~Zhang.
		\newblock Asymptotic normality of maximum likelihood and its variational
		approximation for stochastic blockmodels.
		\newblock {\em The Annals of Statistics}, 41(4):1922--1943, 2013.
		
		\bibitem{borgschayeslovaszsosvesztergombi08}
		C.~Borgs, J.~Chayes, L.~Lov\'{a}sz, V.~S\'{o}s, and K.~Vesztergombi.
		\newblock Convergent sequences of dense graphs i: Subgraph frequencies, metric
		properties and testing.
		\newblock {\em Advances in Mathematics}, 219(6):1801--1851, 2008.
		
		\bibitem{borgschayeslovaszsosvesztergombi12}
		C.~Borgs, J.~Chayes, L.~Lov\'{a}sz, V.~S\'{o}s, and K.~Vesztergombi.
		\newblock Convergent sequences of dense graphs ii. multiway cuts and
		statistical physics.
		\newblock {\em Annals of Mathematics}, pages 151--219, 2012.
		
		\bibitem{celeux:inria}
		G.~Celeux, D.~Chauveau, and J.~Diebolt.
		\newblock Stochastic versions of the em algorithm: an experimental study in the
		mixture case.
		\newblock {\em Journal of Statistical Computation and Simulation},
		55(4):287--314, 1996.
		
		\bibitem{celeuxdiebolt}
		G.~Celeux and J.~Diebolt.
		\newblock The sem algorithm: a probabilistic teacher algorithm derived from the
		{E}{M} algorithm for the mixture problem.
		\newblock {\em Computational Statistics Quarterly}, 2:73--82, 1985.
		
		\bibitem{celissdaudinpierre2012}
		A.~Celisse, J.~J. Daudin, and L.~Pierre.
		\newblock Consistency of maximum-likelihood and variational estimators in the
		stochastic block model.
		\newblock {\em Electronic Journal of Statistics}, 6:1847--1899, 2012.
		
		\bibitem{crawfordwuheimer}
		F.~Crawford, J.~Wu, and R.~Heimer.
		\newblock Hidden population size estimation from respondent-driven sampling: a
		network approach.
		\newblock {\em Journal of the American Statistical Association}, 113:755--766,
		2018.
		
		\bibitem{daudinpicardrobin}
		J.-J. Daudin, F.~Picard, and S.~Robin.
		\newblock A mixture model for random graphs.
		\newblock {\em Statistics and Computing}, 18(2):173--183, 2008.
		
		\bibitem{gile2011}
		K.~Gile.
		\newblock Improved inference for {R}espondent-{D}riven {S}ampling data with
		application to {H}{I}{V} prevalence estimation.
		\newblock {\em Journal of the American Statistical Association},
		106(493):135--146, 2011.
		
		\bibitem{gilehandcock2010}
		K.~Gile and M.~Handcock.
		\newblock Respondent-driven sampling: an assessment of current methodology.
		\newblock {\em Sociol. Methodol.}, 40:285--327, 2010.
		
		\bibitem{gilejohnstonsalganik}
		K.~Gile, L.~Johnston, and M.~Salganik.
		\newblock Diagnostics for respondent-driven sampling.
		\newblock {\em Journal of the Royal Statistical Society A}, 178:241--269, 2015.
		
		\bibitem{goodman}
		L.~Goodman.
		\newblock Snowball sampling.
		\newblock {\em The Annals of Mathematical Statistics}, 32(1):148--170, 1961.
		
		\bibitem{heckathorn}
		D.~Heckathorn.
		\newblock Respondent-driven {S}ampling: a new approach to the study of hidden
		populations.
		\newblock {\em Social Problems}, 44(1):74--99, 1997.
		
		\bibitem{hollandlaskeyleinhardt}
		P.~Holland, K.~Laskey, and S.~Leinhardt.
		\newblock Stochastic blockmodels: some first steps.
		\newblock {\em Social Networks}, 5:109--137, 1983.
		
		\bibitem{jaakola}
		T.~Jaakkola.
		\newblock Tutorial on variational approximation methods.
		\newblock In {\em Advanced Mean Field Methods: Theory and Practice}, Cambridge,
		2000. MIT Press.
		
		\bibitem{jordanghahramanijaakolasaul}
		M.~Jordana, Z.~Ghahramani, T.~Jaakkola, and L.~Saul.
		\newblock An introduction to variational methods for graphical models.
		\newblock {\em Machine Learning}, 37:183--233, 1999.
		
		\bibitem{khabbazianhanlonrussekrohe}
		M.~Khabbazian, B.~Hanlon, Z.~Russek, and K.~Rohe.
		\newblock Novel sampling design for respondent-driven sampling.
		\newblock {\em Electronic Journal of Statistics}, 11(2):4769--4812, 2017.
		
		\bibitem{kuhnlavielle}
		E.~Kuhn and M.~Lavielle.
		\newblock Coupling a stochastic approximation version of {E}{M} with an
		{M}{C}{M}{C} procedure.
		\newblock {\em ESAIM: PS}, 8:115--131, 2004.
		
		\bibitem{lirohe}
		X.~Li and K.~Rohe.
		\newblock Central limit theorems for network driven sampling.
		\newblock {\em Electronic Journal of Statistics}, 11(2):4871--4895, 2017.
		
		\bibitem{lovaszbook}
		L.~Lov\'asz.
		\newblock {\em Large networks and graph limits}, volume~60 of {\em Colloquium
			Publications}.
		\newblock American Mathematical Society, Rhode Island, 2012.
		
		\bibitem{mariadassoutabouy}
		M.~Mariadassou and T.~Tabouy.
		\newblock Consistency and asymptotic normality of stochastic block models
		estimators from sampled data.
		\newblock arXiv:1903.12488, 2019.
		
		\bibitem{mcdiarmid}
		C.~McDiarmid.
		\newblock On the method of bounded differences.
		\newblock In {\em Surveys in Combinatorics}, volume 141 of {\em London
			Mathematical Society Lecture Note Series}, pages 148--188, Cambridge, 1989.
		Cambridge University Press.
		
		\bibitem{mouwverdery}
		T.~Mouw and A.~Verdery.
		\newblock Network sampling with memory: a proposal for more efficient sampling
		from social networks.
		\newblock {\em Sociological Methodology}, 42:206--256, 2012.
		
		\bibitem{riordan}
		O.~Riordan.
		\newblock The phase transition in the configuration model.
		\newblock {\em Combinatorics, Probability and Computing}, 21(1-2):265--299,
		2012.
		
		\bibitem{rohe}
		K.~Rohe.
		\newblock A critical threshold for design effects in network sampling.
		\newblock {\em Annals of Statistics}, 47(1):556--582, 2019.
		
		\bibitem{rolls2013}
		D.~Rolls, P.~Wang, R.~Jenkinson, P.~Pattison, G.~Robins, R.~Sacks-Davis,
		G.~Daraganova, M.~Hellard, and E.~McBryde.
		\newblock Modelling a disease-relevant contact network of people who inject
		drugs.
		\newblock {\em Social Networks}, 35(4):699--710, 2013.
		
		\bibitem{tabouybarbillonchiquet}
		T.~Tabouy, P.~Barbillon, and J.~Chiquet.
		\newblock Variational inference for stochastic block models from sampled data.
		\newblock {\em Journal of the American Statistical Association}, 2019.
		
		\bibitem{jauffretroustide-enquete}
		V.~Tran, C.~Jangal, P.~Feuillet, A.~Bardot, C.~Dumont, I.~Condamine-Ducreux,
		and M.~Jauffret-Roustide.
		\newblock Respondent-driven sampling survey among people who inject drugs in
		paris.
		\newblock in progress, 2020.
		
		\bibitem{vandervaart}
		A.W. van der Vaart.
		\newblock {\em Asymptotic Statistics}.
		\newblock  Cambridge University Press, 1998.
		
		\bibitem{vo-these}
		T.~Vo.
		\newblock {\em Exploration d'un graphe al\'eatoire par des m\'ethodes
			Respondent Driven Sampling}.
		\newblock PhD thesis, Universit\'e Sorbonne Paris Nord, Paris, France, 2020.
		
		\bibitem{volzhecathorn}
		E.~Volz and D.~Heckathorn.
		\newblock Probability-based estimation theory for respondent-driven sampling.
		\newblock {\em Journal of Official Statistics}, 24:79--97, 2008.
		
	\end{thebibliography}
\end{document}